\newcommand{\NN}{\mathbb{N}}
\newcommand{\QQ}{\mathbb{Q}}
\newcommand{\RR}{\mathbb{R}}
\newcommand{\ZZ}{\mathbb{Z}}
\newcommand{\ba}{{\boldsymbol{a}}}
\newcommand{\bb}{{\boldsymbol{b}}}
\newcommand{\bk}{{\boldsymbol{k}}}
\newcommand{\bm}{{\boldsymbol{m}}}
\newcommand{\bx}{{\boldsymbol{x}}}
\newcommand{\by}{{\boldsymbol{y}}}
\newcommand{\bz}{{\boldsymbol{z}}}
\newcommand{\blambda}{{\boldsymbol{\lambda}}}
\newcommand{\bmu}{{\boldsymbol{\mu}}}
\newcommand{\bzero}{{\boldsymbol{0}}}
\newcommand{\cA}{{\mathcal A}}
\newcommand{\cJ}{\mathcal{J}}
\newcommand{\cX}{{\mathcal X}}
\newcommand{\PP}{\operatorname{\mathbb{P}}}
\newcommand{\sign}{\operatorname{sign}}
\newcommand{\comment}[1]{}
\renewcommand{\leq}{\leqslant}
\renewcommand{\geq}{\geqslant}
\newcommand{\proofend}{\hfill\mbox{$\Box$}}
\numberwithin{equation}{section}
\theoremstyle{change} \theorembodyfont{\em}
\newtheorem{Lem}{Lemma.}[section]
\newtheorem{Thm}[Lem]{Theorem.}
\newtheorem{Pro}[Lem]{Proposition.}
\newtheorem{Cor}[Lem]{Corollary.}
\newtheorem{Def}[Lem]{Definition.}
\newtheorem{Rem}[Lem]{Remark.}
\newtheorem{Ex}[Lem]{Example.}
\def\OnlyOnArXiv#1#2{\ifthenelse{\equal{#1}{Y}}{#2}{}}
\def\eq#1{{\rm(\ref{#1})}}
\long\def\Eq#1#2{\ifthenelse{\equal{#1}{*}}
  {\begin{equation*}\begin{aligned}#2\end{aligned}\end{equation*}}
  {\begin{equation}\begin{aligned}\label{#1}#2\end{aligned}\end{equation}}}
\newenvironment{proof}{\noindent{\bf Proof.}}{\proofend}
\begin{document}

\begin{center}
 {\bfseries\Large Basic properties of generalized $\psi$-estimators}

\vspace*{3mm}

{\sc\large
  M\'aty\'as $\text{Barczy}^{*}$,
  Zsolt $\text{P\'ales}^{**,\diamond}$ }

\end{center}

\vskip0.2cm

\noindent
 * HUN-REN–SZTE Analysis and Applications Research Group,
   Bolyai Institute, University of Szeged,
   Aradi v\'ertan\'uk tere 1, H--6720 Szeged, Hungary.

\noindent
 ** Institute of Mathematics, University of Debrecen,
    Pf.~400, H--4002 Debrecen, Hungary.

\noindent {E-mails:} barczy@math.u-szeged.hu (M. Barczy),
                  pales@science.unideb.hu  (Zs. P\'ales).

\noindent $\diamond$ Corresponding author.

\begin{center}
{\it Dedicated to the memory of Professor Zoltán Daróczy}
\end{center}

\vskip0.1cm


{\renewcommand{\thefootnote}{}
\footnote{\textit{2020 Mathematics Subject Classifications\/}: 62F10, 62D99, 26E60.}
\footnote{\textit{Key words and phrases\/}:
generalized $\psi$-estimator, $Z$-estimator, mean-type property, sensitivity, bisymmetry-type inequality, asymptotic properties.}
\vspace*{0.2cm}
\footnote{M\'aty\'as Barczy was supported by the project TKP2021-NVA-09.
Project no.\ TKP2021-NVA-09 has been implemented with the support
 provided by the Ministry of Culture and Innovation of Hungary from the National Research, Development and Innovation Fund,
 financed under the TKP2021-NVA funding scheme.
Zsolt P\'ales is supported by the K-134191 NKFIH Grant.}}

\vspace*{-10mm}

\begin{abstract}
We establish several properties of (weighted) generalized $\psi$-estimators introduced by Barczy and P\'ales in 2022: mean-type, monotonicity and sensitivity properties,
bisymmetry-type inequality and some asymptotic and continuity properties as well.
We also illustrate these properties by providing several examples including statistical ones as well.
\end{abstract}


\section{Introduction}
\label{section_intro}

In the theory of means several attempts were made in order to extend the notion of power and, more generally, of quasi-arithmetic means whose theory was developed already in the Thirties of the last century by Hardy, Littlewood and Pólya in their monograph \cite{HarLitPol34}.
In one direction, as an extension of power means, Gini \cite{Gin1938} introduced certain homogeneous means depending on two parameters (nowadays called Gini means).
In the other direction, in 1958, Bajraktarević \cite{Baj58} constructed a new class of means, generated by two functions, which generalized both the Gini means as well as the quasi-arithmetic means.
The next important development is due to Daróczy \cite{Dar71b,Dar72b} who introduced the notion of a deviation function and the concept of a deviation mean generated by a deviation function.
The second author in 1982 created further generalizations called quasi-deviation functions and quasi-deviation means \cite{Pal82a}.

In the theory of statistical estimators, Huber \cite{Hub64, Hub67} introduced a method to obtain estimation of an unknown parameter of the distribution of a random variable.
It was first observed by the authors in 2022 that the constructions of deviation means by Daróczy and $Z$-estimators (also called $\psi$-estimators) by Huber have many similarities, and therefore there should exist parallel results in both theories.
According to our knowledge, many results known for deviation means have not been elaborated for the setting of $Z$-estimators.
This paper attempts to establish new properties of $Z$-estimators that are motivated by analogous ones from the theory of deviation and quasi-deviation means.
In fact, we will derive new properties of (weighted) generalized $\psi$-estimators which were introduced by Barczy and P\'ales in \cite{BarPal2}  (see Definition \ref{Def_sign_change}).

Let us recall the notion of $Z$-estimators.
Let $(\xi_n)_{n\geq 1}$ be a sequence of independent and identically distributed (i.i.d.) random variables with values
 in a measurable space $(X,\cX)$ such that the distribution of $\xi_1$ depends on an unknown parameter $\vartheta$ belonging to a Borel measurable subset $\Theta\subseteq\RR$.
Based on the observations $\xi_1,\ldots,\xi_n$, the $Z$-estimator (where the letter Z refers to ''zero'') of the parameter $\vartheta$ is defined to be a solution $\widehat\vartheta_n:=\widehat\vartheta_n(\xi_1,\ldots,\xi_n)$ of the equation (with respect to the unknown parameter):
 \[
 \sum_{i=1}^n\psi(\xi_i,t)=0, \qquad t\in\Theta,
 \]
where $\psi:X\times\Theta\to\RR$ is a certain function which is measurable in its second variable with respect to the sigma-algebra $\cX$. In the statistical literature, the above $Z$-estimator is often called a $\psi$-estimator.
For a detailed exposition about $Z$-estimators, see, e.g., Kosorok \cite[Sections 2.2.5 and 13]{Kos} or van der Vaart \cite[Section 5]{Vaa}.

In our recent paper Barczy and P\'ales \cite{BarPal2}, we introduced the notion of weighted generalized $\psi$-estimators (recalled below in Definition \ref{Def_sign_change}),
 and we studied their existence and uniqueness.
In a companion paper Barczy and P\'ales \cite{BarPal3}, we solved the comparison problem and the equality problem for generalized $\psi$-estimators.

Throughout this paper, we fix the following notations.
Let $\NN$, $\ZZ_+$, $\QQ$, $\RR$, $\RR_+$ and $\RR_{++}$ denote the sets of positive integers,
 non-negative integers, rational numbers, real numbers, non-negative real numbers and positive real numbers, respectively.
A real interval will be called nondegenerate if it contains at least two distinct points.
For each $n\in\NN$, let us also introduce the set $\Lambda_n:=\RR_+^n\setminus\{(0,\ldots,0)\}$.

Throughout this paper, let $X$ be a nonempty set, $\Theta$ be a nondegenerate open interval of $\RR$.
Let $\Psi(X,\Theta)$ denote the class of real-valued functions $\psi:X\times\Theta\to\RR$ such that,
 for all $x\in X$, there exist $t_+,t_-\in\Theta$ such that $t_+<t_-$ and $\psi(x,t_+)>0>\psi(x,t_-)$.
Roughly speaking, a function $\psi\in\Psi(X,\Theta)$ satisfies the following property:
 for all $x\in X$, the function $t\ni\Theta\mapsto \psi(x,t)$ changes sign (from positive to negative) on the interval $\Theta$ at least once.

\begin{Def}\label{Def_sign_change}
Let $\Theta$ be a nondegenerate open interval of $\RR$. For a function $f:\Theta\to\RR$, consider the following three level sets
\[
  \Theta_{f>0}:=\{t\in \Theta: f(t)>0\},\qquad
  \Theta_{f=0}:=\{t\in \Theta: f(t)=0\},\qquad
  \Theta_{f<0}:=\{t\in \Theta: f(t)<0\}.
\]
We say that $\vartheta\in\Theta$ is a \emph{point of sign change (of decreasing type) for $f$} if
 \[
 f(t) > 0 \quad \text{for $t<\vartheta$,}
   \qquad \text{and} \qquad
    f(t)< 0 \quad  \text{for $t>\vartheta$.}
 \]
\end{Def}

Note that there can exist at most one element $\vartheta\in\Theta$ which is a point of sign change for $f$.
Further, if $f$ is continuous at a point $\vartheta$ of sign change, then $\vartheta$ is the unique zero of $f$.

\begin{Def}\label{Def_Tn}
We say that a function $\psi\in\Psi(X,\Theta)$
  \begin{enumerate}[(i)]
    \item \emph{possesses the property $[C]$ (briefly, $\psi$ is a $C$-function)} if
           it is continuous in its second variable, i.e., if, for all $x\in X$,
           the mapping $\Theta\ni t\mapsto \psi(x,t)$ is continuous.
    \item \emph{possesses the property $[T_n]$ (briefly, $\psi$ is a $T_n$-function)
           for some $n\in\NN$} if there exists a mapping $\vartheta_{n,\psi}:X^n\to\Theta$ such that,
           for all $\pmb{x}=(x_1,\dots,x_n)\in X^n$ and $t\in\Theta$,
           \begin{align*}
             \psi_{\pmb{x}}(t):=\sum_{i=1}^n \psi(x_i,t) \begin{cases}
                 > 0 & \text{if $t<\vartheta_{n,\psi}(\pmb{x})$,}\\
                 < 0 & \text{if $t>\vartheta_{n,\psi}(\pmb{x})$},
            \end{cases}
           \end{align*}
          that is, for all $\pmb{x}\in X^n$, the value $\vartheta_{n,\psi}(\pmb{x})$ is a point of sign change for the function $\psi_{\pmb{x}}$. If there is no confusion, instead of $\vartheta_{n,\psi}$ we simply write $\vartheta_n$.
          We may call $\vartheta_{n,\psi}(\pmb{x})$ as a generalized $\psi$-estimator for
         some unknown parameter in $\Theta$ based on the realization $\bx=(x_1,\ldots,x_n)\in X^n$. If, for each $n\in\NN$, $\psi$ is a $T_n$-function, then we say that \emph{$\psi$ possesses the property $[T]$ (briefly, $\psi$ is a $T$-function)}.
    \item \emph{possesses the property $[Z_n]$ (briefly, $\psi$ is a $Z_n$-function) for some $n\in\NN$} if it is a $T_n$-function and
    \[
   \psi_{\pmb{x}}(\vartheta_{n,\psi}(\pmb{x}))=\sum_{i=1}^n \psi(x_i,\vartheta_{n,\psi}(\pmb{x}))= 0
    \qquad \text{for all}\quad \pmb{x}=(x_1,\ldots,x_n)\in X^n.
    \]
    If, for each $n\in\NN$, $\psi$ is a $Z_n$-function, then we say that \emph{$\psi$ possesses the property $[Z]$ (briefly, $\psi$ is a $Z$-function)}.
    \item \emph{possesses the property $[T_n^{\pmb{\lambda}}]$ for some $n\in\NN$ and $\pmb{\lambda}=(\lambda_1,\ldots,\lambda_n)\in\Lambda_n$ (briefly, $\psi$ is a $T_n^{\pmb{\lambda}}$-function)} if there exists a mapping $\vartheta_{n,\psi}^{\pmb{\lambda}}:X^n\to\Theta$ such that, for all $\pmb{x}=(x_1,\dots,x_n)\in X^n$ and $t\in\Theta$,
          \begin{align*}
           \psi_{\pmb{x},\pmb{\lambda}}(t):= \sum_{i=1}^n \lambda_i\psi(x_i,t) \begin{cases}
                 > 0 & \text{if $t<\vartheta_{n,\psi}^{\pmb{\lambda}}(\pmb{x})$,}\\
                 < 0 & \text{if $t>\vartheta_{n,\psi}^{\pmb{\lambda}}(\pmb{x})$},
             \end{cases}
           \end{align*}
           that is, for all $\pmb{x}\in X^n$, the value $\vartheta_{n,\psi}^{\pmb{\lambda}}(\pmb{x})$ is
           a point of sign change for the function $\psi_{\pmb{x},\pmb{\lambda}}$.
           If there is no confusion, instead of $\vartheta_{n,\psi}^{\pmb{\lambda}}$ we simply write $\vartheta_n^{\pmb{\lambda}}$.
          We may call $\vartheta_{n,\psi}^{\pmb{\lambda}}(\pmb{x})$
          as a weighted generalized $\psi$-estimator for some unknown parameter in $\Theta$ based
          on the realization $\bx=(x_1,\ldots,x_n)\in X^n$ and weights $(\lambda_1,\ldots,\lambda_n)\in\Lambda_n$.
    \item \emph{possesses the property $[Z_n^{\pmb{\lambda}}]$ for some $n\in\NN$ and $\pmb{\lambda}=(\lambda_1,\ldots,\lambda_n)\in\Lambda_n$ (briefly, $\psi$ is a $Z_n^{\pmb{\lambda}}$-function)} if it is a $T_n^{\pmb{\lambda}}$-function and
    \[
        \psi_{\pmb{x,\blambda}}(\vartheta_{n,\psi}^\blambda(\pmb{x}))
           =\sum_{i=1}^n \lambda_i \psi(x_i,\vartheta_{n,\psi}^\blambda(\bx))= 0
    \qquad \text{for all}\quad \pmb{x}=(x_1,\ldots,x_n)\in X^n.
    \]
      \item \emph{possesses the property $[W_n]$ for some $n\in\NN$ (briefly, $\psi$ is a $W_n$-function)}
           if it is a $T_n^{\pmb{\lambda}}$-function for all $\pmb{\lambda}\in\Lambda_n$. If, for each $n\in\NN$, $\psi$ is a $W_n$-function, then we say that \emph{$\psi$ possesses the property $[W]$ (briefly, $\psi$ is a $W$-function)}.
   \end{enumerate}
\end{Def}

It can be seen that if $\psi$ is continuous in its second variable, and, for some $n\in\NN$, it is a $T_n$-function, then it also a $Z_n$-function.
Further, if $\psi\in\Psi(X,\Theta)$ is a $T_n$-function for some $n\in\NN$, then $\vartheta_{n,\psi}$ is symmetric in the sense that
$\vartheta_{n,\psi}(x_1,\ldots,x_n) = \vartheta_{n,\psi}(x_{\pi(1)},\ldots,x_{\pi(n)})$ holds for all $x_1,\ldots,x_n\in X$ and all permutations $(\pi(1),\ldots,\pi(n))$ of $(1,\ldots,n)$.

Assume that $\psi$ possesses property $[W]$. Then it also possesses property $[T]$, and, for all $n\in\NN$, $\vartheta_{n,\psi}=\vartheta_{n,\psi}^{(1,\dots,1)}$ holds on $X^n$.
More generally, for all $k,n_1,\dots,n_k\in\NN$ and $x_1,\dots,x_k\in X$, we have
\Eq{*}{
  \vartheta_{n_1+\dots+n_k,\psi}(\underbrace{x_1,\dots,x_1}_{n_1},\dots,\underbrace{x_k,\dots,x_k}_{n_k})
  =\vartheta_{k,\psi}^{(n_1,\dots,n_k)}(x_1,\dots,x_k).
}
In addition, $\vartheta_{n,\psi}^{\blambda}=\vartheta_{n,\psi}^{s\blambda}$ holds on $X^n$ for all $n\in\NN$, $s>0$ and $\blambda\in\Lambda_n$.
We refer to this last property that the generalized weighted $\psi$-estimator is null-homogeneous in the weights.

Given $q\in\NN$ and properties $[P_1], \ldots, [P_q]$ which can be any of the properties introduced in Definition~\ref{Def_Tn}, the subclass of $\Psi(X,\Theta)$ consisting of elements possessing the properties $[P_1],\ldots,[P_q]$, will be denoted by $\Psi[P_1,\ldots,P_q](X,\Theta)$.

The paper is organized as follows.
In Section \ref{section_ine_mon_prop}, we establish a mean-type property of generalized $\psi$-estimators (see Theorem \ref{Thm_psi_becsles_mean_prop}),
 and we prove monotonicity of weighted generalized $\psi$-estimators in the weights belonging to a line (see Theorem \ref{Thm_theta_monotone}).
We also establish a bisymmetry-type inequality for weighted generalized $\psi$-estimators (see Theorem \ref{Thm_bisymmetry}),
 which is a generalization of bisymmetry inequality for weighted quasi-deviation means
 due to P\'ales \cite[Theorem 3.2]{Pal82a}.

Section \ref{section_asy_con_prop} is devoted to establishing
asymptotic and regulariy properties of generalized $\psi$-estimators.
Theorem \ref{Thm_psi_est_eq_3_T} provides a continuity property,
 and we also formulate a consequence of this result (see Corollary \ref{Cor_psi_est_eq_3_T}),
 which points out an interesting feature of generalized $\psi$-estimators.
Namely, given $n,m,\ell\in\NN$, suppose that one observes $\bz\in X^m$ and then $\by\in X^n$ repeatedly $\ell$-times.
In this case it turns out that, roughly speaking, the generalized $\psi$-estimator does not essentially depend
 on $\bz\in X^m$ in the sense that its effect disappears as $\ell\to\infty$.
One could say that, in this scenario, the observation $\bz$ can be considered asymptotically as an outlier.
In Definition \ref{Def_sensitivity}, we introduce the notion of sensitivity for estimators,
 and then we prove that this property holds for generalized $\psi$-estimators
(see Theorem \ref{Thm_psi_est_infinit} and Corollary \ref{Cor_infinit}).

In Section \ref{section_examples}, we give several applications of
 our results in Sections \ref{section_ine_mon_prop} and \ref{section_asy_con_prop}.
It turns out that Theorems \ref{Thm_psi_becsles_mean_prop} and \ref{Thm_bisymmetry2}
 can efficiently be used to show that a certain estimator cannot be
 a generalized $\psi$-estimator, see Examples \ref{Ex2} and \ref{Ex4}.
We also present some examples for sensitive and non-sensitive estimators,
 which have important role in statistics (see Example \ref{Ex5}).
Further, we calculate the weighted generalized $\psi$-estimators for two particular
 functions $\psi$ corresponding to MLE estimators,
 and then we formulate the 2-variable bisymmetry-type inequality in these special cases (see Example \ref{Ex1}).

We close the paper with Appendix \ref{App1}, where we recall a result on the characterisation of quasi-affine functions, which is a folklore in the literature.
Namely, given a nondegenerate interval $I\subseteq \RR$ and $f:I\to\RR$, we prove that
 $f$ is quasi-affine if and only if $f$ is monotone.
For completeness, we provide a proof as well, since we could not address any reference for it.
This result is used in the proof of Theorem \ref{Thm_theta_monotone}.

\section{Inequality and monotonicity properties of generalized $\psi$-estimators}
\label{section_ine_mon_prop}

First, we establish a mean-type property of generalized $\psi$-estimators, which is a generalization of Proposition 1.4 in Barczy and P\'ales \cite{BarPal3}.
In the theory of means, the corresponding property is called internity, see, e.g., P\'ales \cite[page 65]{Pal1984}.
In the sequel, we will adopt the following convention: if $k,n_1,\dots,n_k\in\NN$ and
 $\bx_1=(x_{1,1},\dots,x_{1,n_1})\in X^{n_1}$, \dots, $\bx_k=(x_{k,1},\dots,x_{k,n_k})\in X^{n_k}$,
 then their \emph{concatenation} $(\bx_1,\dots,\bx_k)\in X^{n_1}\times\cdots\times X^{n_k}$ is identified by
\[
 (x_{1,1},\dots,x_{1,n_1}, \dots,
 x_{k,1},\dots,x_{k,n_k})\in X^{n_1+\dots+n_k}.
\]

\begin{Thm}\label{Thm_psi_becsles_mean_prop}
Let $\psi\in\Psi[T](X,\Theta)$.
Then, for each $k,n_1,\dots,n_k\in\NN$ and $\bx_1\in X^{n_1}$, \dots, $\bx_k\in X^{n_k}$, we have
\begin{align}\label{help_mean_prop_csop}
\begin{split}
 \min(\vartheta_{n_1}(\bx_1),\dots,\vartheta_{n_k}(\bx_k))
  \leq \vartheta_{n_1+\dots+n_k}(\bx_1,\dots,\bx_k)\leq \max(\vartheta_{n_1}(\bx_1),\dots,\vartheta_{n_k}(\bx_k)).
\end{split}
\end{align}
Furthermore, if $\psi\in\Psi[Z](X,\Theta)$ and not all of the values $\vartheta_{n_1}(\bx_1),\dots,\vartheta_{n_k}(\bx_k)$ are equal, then both inequalities in \eqref{help_mean_prop_csop} are strict.
\end{Thm}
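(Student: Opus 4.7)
The plan is to set $\vartheta_j := \vartheta_{n_j}(\bx_j)$ for $j=1,\dots,k$, $m := \min_j \vartheta_j$, $M := \max_j \vartheta_j$, and $\vartheta := \vartheta_{n_1+\cdots+n_k}(\bx_1,\dots,\bx_k)$, and then to exploit the elementary identity driving the whole argument, namely that concatenation corresponds to addition of partial sums:
\[
\psi_{(\bx_1,\dots,\bx_k)}(t) \;=\; \sum_{j=1}^k \psi_{\bx_j}(t) \qquad (t \in \Theta).
\]
Hence the sign pattern of $\psi_{(\bx_1,\dots,\bx_k)}$ is controlled by the signs of its individual summands.

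For the non-strict bound I would argue by contradiction. Assuming $\vartheta < m$, I pick some $t \in (\vartheta, m)$, which is available since $\Theta$ is an open interval containing both $\vartheta$ and $m$. Then $t < m \leq \vartheta_j$ for every $j$, so property $[T_{n_j}]$ gives $\psi_{\bx_j}(t) > 0$, whence $\psi_{(\bx_1,\dots,\bx_k)}(t) > 0$; on the other hand $t > \vartheta$ together with property $[T_{n_1+\cdots+n_k}]$ forces $\psi_{(\bx_1,\dots,\bx_k)}(t) < 0$, a contradiction. The inequality $\vartheta \leq M$ is then obtained by the mirror-image contradiction, testing at some $t \in (M, \vartheta)$.

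For the strict version, assuming $\psi \in \Psi[Z]$ with not all $\vartheta_j$ equal, I would directly evaluate $\psi_{(\bx_1,\dots,\bx_k)}$ at the endpoint $t = m$. Property $[Z_{n_j}]$ gives $\psi_{\bx_j}(m) = 0$ for every index $j$ with $\vartheta_j = m$, while property $[T_{n_j}]$ gives $\psi_{\bx_j}(m) > 0$ for every $j$ with $\vartheta_j > m$; the latter index set is nonempty by hypothesis, so summing yields $\psi_{(\bx_1,\dots,\bx_k)}(m) > 0$. Were $\vartheta$ equal to $m$, property $[Z_{n_1+\cdots+n_k}]$ would force $\psi_{(\bx_1,\dots,\bx_k)}(\vartheta) = 0$, contradicting this strict positivity, so $m < \vartheta$. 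A symmetric evaluation at $t = M$ gives $\psi_{(\bx_1,\dots,\bx_k)}(M) < 0$ and therefore $\vartheta < M$.

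I do not anticipate a substantive obstacle. The only point requiring a moment of care is that Definition \ref{Def_sign_change} leaves the sign of $\psi_{\bx}(\vartheta)$ unspecified in the pure $[T]$ setting, which is exactly why the non-strict inequality must be established through an interior test point rather than by direct evaluation at $m$ or $M$; the strict version then relies on the additional identity $\psi_{\bx}(\vartheta)=0$ furnished by property $[Z]$ to upgrade $\leq$ to $<$.
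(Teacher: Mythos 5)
Your proposal is correct and follows essentially the same route as the paper's proof: the non-strict bounds come from summing the sign conditions at a test point below the minimum (the paper takes a limit $t\uparrow m$ where you phrase it as a contradiction with an interior point, a cosmetic difference), and the strict bounds come from evaluating the concatenated sum at $m$ and $M$, using $[Z]$ to zero out the terms attaining the extremum and $[T]$ to make at least one remaining term strictly signed. No gaps.
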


\begin{proof}
Let $\bx_1=(x_{1,1},\dots,x_{1,n_1})\in X^{n_1}$, \dots, $\bx_k=(x_{k,1},\dots,x_{k,n_k})\in X^{n_k}$, and let $t\in\Theta$ with $t<\min(\vartheta_{n_1}(\bx_1),\dots,\vartheta_{n_k}(\bx_k))$ be arbitrary.
Then, for all $i\in\{1,\dots,k\}$, we have $t<\vartheta_{n_i}(\bx_i)$, and hence
\begin{align*}
   \sum_{j=1}^{n_i} \psi(x_{i,j},t)>0.
\end{align*}
Summing up these inequalities for $i\in\{1,\dots,k\}$ side by side, we get
\begin{align*}
   \sum_{i=1}^k\sum_{j=1}^{n_i} \psi(x_{i,j},t)>0,
\end{align*}
which yields that $t\leq \vartheta_{n_1+\dots+n_k}(\bx_1,\dots,\bx_k)$.

Upon taking the limit $t\uparrow \min(\vartheta_{n_1}(\bx_1),\dots,\vartheta_{n_k}(\bx_k))$,
 we can conclude that the left hand side inequality in \eqref{help_mean_prop_csop} is valid.
A similar argument shows that the right hand side inequality in \eqref{help_mean_prop_csop} is also true.

To complete the proof, assume that $\psi\in\Psi[Z](X,\Theta)$ and that not all of the values $\vartheta_{n_1}(\bx_1)$, \dots, $\vartheta_{n_k}(\bx_k)$ are equal.
Then there exist $i_1,i_2\in\{1,\ldots,k\}$ with $i_1\ne i_2$ and
 \[
    \vartheta_{n_{i_1}}(\bx_{i_1}) = \min(\vartheta_{n_1}(\bx_1),\dots,\vartheta_{n_k}(\bx_k))
      < \max(\vartheta_{n_1}(\bx_1),\dots,\vartheta_{n_k}(\bx_k))=\vartheta_{n_{i_2}}(\bx_{i_2}).
 \]
Since $\psi$ is a $Z_{n_i}$-function, $i\in\{1,\ldots,k\}$, we have that
 \begin{align*}
   &\sum_{j=1}^{n_{i_1}} \psi(x_{i_1,j}, \vartheta_{n_{i_1}}(\bx_{i_1}))=0,\qquad \sum_{j=1}^{n_{i_2}} \psi(x_{i_2,j}, \vartheta_{n_{i_1}}(\bx_{i_1}))>0,\\
   &\sum_{j=1}^{n_i} \psi(x_{i,j}, \vartheta_{n_{i_1}}(\bx_{i_1}))\geq 0, \qquad i\in\{1,\ldots,k\}\setminus\{i_1,i_2\},
 \end{align*}
 and
 \begin{align*}
   &\sum_{j=1}^{n_{i_2}} \psi(x_{i_2,j}, \vartheta_{n_{i_2}}(\bx_{i_2}))=0,\qquad
    \sum_{j=1}^{n_{i_1}} \psi(x_{i_1,j}, \vartheta_{n_{i_2}}(\bx_{i_2}))<0,\\
   &\sum_{j=1}^{n_i} \psi(x_{i,j}, \vartheta_{n_{i_2}}(\bx_{i_2}))\leq 0, \qquad i\in\{1,\ldots,k\}\setminus\{i_1,i_2\}.
 \end{align*}
These inequalities imply that
\begin{align*}
 \sum_{i=1}^k  \sum_{j=1}^{n_i} \psi(x_{i,j}, \vartheta_{n_{i_1}}(\bx_{i_1})) >0
 \qquad\mbox{and}\qquad
 \sum_{i=1}^k  \sum_{j=1}^{n_i} \psi(x_{i,j}, \vartheta_{n_{i_2}}(\bx_{i_2})) < 0.
\end{align*}
Thus, using that $\psi$ is also a $Z_{n_1+\cdots+n_k}$-function, it follows that
 \[
 \vartheta_{n_{i_1}}(\bx_{i_1}) < \vartheta_{n_1+\dots+n_k}(\bx_1,\dots,\bx_k) < \vartheta_{n_{i_2}}(\bx_{i_2}),
 \]
 as desired.
\end{proof}

We note that the particular case of Theorem \ref{Thm_psi_becsles_mean_prop} when $n_1:=1,\ldots,n_k:=1$ was established in Proposition 1.4 in Barczy and P\'ales \cite{BarPal3}.
Furthermore, note that, given $k,n_1,\dots,n_k\in\NN$ and $\bx_1\in X^{n_1}$, \dots, $\bx_k\in X^{n_k}$, the inequality \eqref{help_mean_prop_csop} remains valid if, instead of $\psi\in\Psi[T](X,\Theta)$, we only suppose that $\psi$ has the properties
$[T_{n_i}]$, $i=1,\ldots,k$, and $[T_{n_1+\cdots+n_k}]$.

We recall that, given a non-empty convex subset $S$ of $\RR^n$, a function $f:S\to\RR$ is called quasi-concave if for all $x,y\in S$ and $t\in(0,1)$, the inequality
 \begin{align*}
   f(tx + (1-t)y) \geq \min\{f(x),f(y)\}
 \end{align*}
 holds.
A function $f:S\to\RR$ is called quasi-convex if for all $x,y\in S$
 and $t\in(0,1)$, the inequality
 \begin{align*}
   f(tx + (1-t)y) \leq \max\{f(x),f(y)\}
 \end{align*}
 holds.
Finally, a function $f:S\to\RR$ is said to be quasi-affine if it is both quasi-convex and quasi-concave. It is known that $f:S\to\RR$ is quasi-convex
 if and only if, for all $\alpha\in\RR$, the level set $\{x\in S : f(x)\leq \alpha \}$ is convex.
Further, if the set $\{x\in S : f(x) = \alpha \}$ is convex for all $\alpha\in\RR$, and $f$ is continuous, then $f$ is quasi-affine,
 see, e.g., Greenberg and Pierskalla \cite{GrePie}.
It is known that, given a nondegenerate interval $I\subseteq \RR$,
a function $f:I\to\RR$ is quasi-affine if and only if it is monotone on $I$, see Proposition \ref{Pro_quasi}.
Finally, we mention that in the literature quasi-affine functions sometimes called quasi-linear functions as well.

\begin{Thm}\label{Thm_theta_monotone}
Let $n\in\NN$ and let $\psi\in\Psi[W_n](X,\Theta)$.
For $\ba,\bb\in\RR^n$, denote $\cJ_{\ba,\bb}:=\{s\in\RR : s\ba +\bb \in\Lambda_n\}$.
Then, for all $\bx\in X^n$, the function
 \[
   \cJ_{\ba,\bb}\ni s\mapsto \vartheta_n^{s\ba+\bb}(\bx)\in\Theta
 \]
 is monotone.
\end{Thm}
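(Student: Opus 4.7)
The plan is to show that the map $\Phi\colon s\mapsto \vartheta_n^{s\ba+\bb}(\bx)$ is quasi-affine on $\cJ_{\ba,\bb}$ and then invoke Proposition~\ref{Pro_quasi} to upgrade quasi-affinity to monotonicity. I would split the argument into three short steps.

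First, I would observe that $\cJ_{\ba,\bb}$ is itself a real interval. Since $\Lambda_n=\RR_+^n\setminus\{\bzero\}$ is convex and $\cJ_{\ba,\bb}$ is the preimage of $\Lambda_n$ under the affine map $s\mapsto s\ba+\bb$, convexity is preserved, so $\cJ_{\ba,\bb}$ is an interval. If it is degenerate, there is nothing to prove, so I would assume throughout that it is nondegenerate.

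Second, and this is the essential step, I would exploit the fact that for each fixed $t\in\Theta$ the map $\blambda\mapsto \psi_{\bx,\blambda}(t)=\sum_{i=1}^n \lambda_i\psi(x_i,t)$ is linear in $\blambda$. Fix $s_1,s_2\in\cJ_{\ba,\bb}$ and $\lambda\in(0,1)$, and set $s:=\lambda s_1+(1-\lambda)s_2\in\cJ_{\ba,\bb}$. Since $s\ba+\bb=\lambda(s_1\ba+\bb)+(1-\lambda)(s_2\ba+\bb)$, linearity yields
\[
\psi_{\bx,\,s\ba+\bb}(t)=\lambda\,\psi_{\bx,\,s_1\ba+\bb}(t)+(1-\lambda)\,\psi_{\bx,\,s_2\ba+\bb}(t),\qquad t\in\Theta.
\]
If $t\in\Theta$ lies strictly below both $\vartheta_n^{s_1\ba+\bb}(\bx)$ and $\vartheta_n^{s_2\ba+\bb}(\bx)$, the $T_n^\blambda$-property applied to the two weight vectors $s_1\ba+\bb$ and $s_2\ba+\bb$ makes both terms on the right-hand side strictly positive; hence $\psi_{\bx,\,s\ba+\bb}(t)>0$, which forces $t\leq\vartheta_n^{s\ba+\bb}(\bx)$ (for otherwise the $T_n^\blambda$-property for the weight $s\ba+\bb$ would require a strictly negative value). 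Letting $t$ tend up to $\min(\vartheta_n^{s_1\ba+\bb}(\bx),\vartheta_n^{s_2\ba+\bb}(\bx))$, which is permissible because $\Theta$ is open and contains both values, I obtain
\[
\min\bigl(\vartheta_n^{s_1\ba+\bb}(\bx),\vartheta_n^{s_2\ba+\bb}(\bx)\bigr)\leq \vartheta_n^{s\ba+\bb}(\bx).
\]
A symmetric argument, starting from $t$ strictly above $\max(\vartheta_n^{s_1\ba+\bb}(\bx),\vartheta_n^{s_2\ba+\bb}(\bx))$ and using negativity of both terms, delivers the matching upper bound with $\max$. This establishes quasi-affinity of $\Phi$.

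Third, I would apply Proposition~\ref{Pro_quasi} to conclude that $\Phi$ is monotone on the interval $\cJ_{\ba,\bb}$. The argument is genuinely short because the weight $\blambda$ enters linearly into the defining sign-change condition, and I do not foresee any substantial obstacle; the only delicate point is the limit passage in $t$, which is handled by the openness of $\Theta$ and by the fact that $\vartheta_n^{s_i\ba+\bb}(\bx)\in\Theta$.
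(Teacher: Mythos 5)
Your proposal is correct and follows essentially the same route as the paper: establish that $\cJ_{\ba,\bb}$ is an interval, use the linearity of $\blambda\mapsto\psi_{\bx,\blambda}(t)$ to combine the two sign conditions for the weights $s_1\ba+\bb$ and $s_2\ba+\bb$, pass to the limit in $t$ using the openness of $\Theta$ to get quasi-convexity and quasi-concavity, and then invoke Proposition~\ref{Pro_quasi}. The only cosmetic difference is that you argue from below the minimum and above the maximum symmetrically, whereas the paper writes out only the quasi-convexity half in detail.
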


\begin{proof}
Let $n\in\NN$, $\bx:=(x_1,\ldots,x_n)\in X^n$, $\ba:=(a_1,\ldots,a_n)\in\RR^n$, and $\bb:=(b_1,\ldots,b_n)\in\RR^n$ be fixed arbitrarily.
Using that $\Lambda_n$ is convex, it follows that $\cJ_{\ba,\bb}$ is a convex subset of $\RR$, and then it is an interval.
Note that it can happen that $\cJ_{\ba,\bb}$ is empty (e.g., in case of $\ba=\bzero$ and $\bb=(-1,\ldots,-1)^\top\in\RR^n$), when there is nothing to prove.

In what follows, let us assume that $\cJ_{\ba,\bb}$ is non-empty. Define the function $g:\cJ_{\ba,\bb}\to\Theta$ by $g(s):=\vartheta_n^{s\ba+\bb}(\bx)$, $s\in\cJ_{\ba,\bb}$.
Since $\psi$ is a $T_n^{s\ba+\bb}$-function for all $s\in\cJ_{\ba,\bb}$, we have that $g$ is well-defined.
In view of Proposition \ref{Pro_quasi}, it is sufficient to show that $g$ is quasi-affine on $\cJ_{\ba,\bb}$,
i.e., it is quasi-convex and quasi-concave.
To see that $g$ is quasi-convex, let $u,v\in\cJ_{\ba,\bb}$ and $t\in(0,1)$. Let $c\in\Theta$ with $c>\max(g(u),g(v))$ be arbitrary (since $\Theta$ is open, such a $c$ exists).
Since $\psi$ has the properties $[T_n^{u\ba+\bb}]$ and $[T_n^{v\ba+\bb}]$, we get
 \begin{align*}
   \sum_{i=1}^n (ua_i + b_i) \psi(x_i,c)<0
   \qquad\mbox{and}\qquad
   \sum_{i=1}^n (va_i + b_i) \psi(x_i,c)<0.
 \end{align*}
Multiplying these inequalities by $t$ and by $1-t$, respectively,
and adding the inequalities so obtained, we arrive at
 \begin{align*}
   \sum_{i=1}^n ((tu+(1-t)v)a_i + b_i) \psi(x_i,c)<0.
 \end{align*}
This implies that $g(tu+(1-t)v)\leq c$.
Upon taking the limit $c\downarrow\max(g(u),g(v))$, it follows that $g(tu+(1-t)v)\leq\max(g(u),g(v))$, which shows the quasi-convexity of $g$.

The verification of the quasi-concavity of $g$ is analogous, therefore, it is omitted.
\end{proof}

\begin{Rem}
We point out that Theorem~\ref{Thm_theta_monotone} implies the inequality \eqref{help_mean_prop_csop} in
Theorem~\ref{Thm_psi_becsles_mean_prop} in the particular case $k=2$, provided that $\psi$ possesses the property $[W]$. To see this, let $n,m\in\NN$, $\by=(y_1,\ldots,y_n)\in X^n$, $\bz=(z_1,\ldots,z_m)\in X^m$, and let $\psi\in\Psi[W](X,\Theta)$.
With the notations $\bx:=(\by,\bz)\in X^n\times X^m$,
 \Eq{*}{
  \ba:=((-1,\dots,-1),(1,\dots,1))\in\ZZ^n\times\ZZ^m,\qquad\mbox{and}\qquad \bb:=((1,\dots,1),(0,\dots,0))\in\ZZ^n\times\ZZ^m,
 }
we get that $\bb\in\Lambda_{n+m}$ and $\ba+\bb=((0,\dots,0),(1,\dots,1))\in\Lambda_{n+m}$, and hence $0,1\in\cJ_{\ba,\bb}$.
Therefore, by the convexity of $\cJ_{\ba,\bb}$, we have that $[0,1]\subseteq \cJ_{\ba,\bb}$.
Consequently, Theorem \ref{Thm_theta_monotone} implies that
 the function $f\colon [0,1]\to\Theta$, $f(s):=\vartheta_{n+m}^{s\ba+\bb}(\by,\bz)$, $s\in[0,1]$, is monotone.
Thus, it follows that
\Eq{help_1}{
 \min(f(0),f(1))
  \leq f\big(\tfrac{1}{2}\big)
  \leq\max(f(0),f(1)).
}
Here $f(0)=\vartheta_{n+m}^{\bb}(\by,\bz)=\vartheta_{n}(\by)$, $f(1)=\vartheta_{n+m}^{\ba+\bb}(\by,\bz)=\vartheta_{m}(\bz)$ and
 \Eq{*}{
 f\big(\tfrac{1}{2}\big)
 =\vartheta_{n+m}^{\frac{1}{2}\ba+\bb}(\by,\bz)
 =\vartheta_{n+m}^{\ba+2\bb}(\by,\bz)
 =\vartheta_{n+m}(\by,\bz),
 }
 since $\ba+2\bb=\big((1,\dots,1),(1,\dots,1)\big)\in\NN^n\times\NN^m$
and we used that the generalized weighted $\psi$-estimator is null-homogeneous in the weights
 (see the discussion after Definition \ref{Def_Tn}).
Thus, the inequality \eqref{help_1} can be rewritten as
\Eq{*}{
 \min(\vartheta_{n}(\by),\vartheta_{m}(\bz))
  \leq \vartheta_{n+m}(\by,\bz)
  \leq\max(\vartheta_{n}(\by),\vartheta_{m}(\bz)),
}
which is the particular case of inequality \eqref{help_mean_prop_csop} with $k:=2$, $n_1:=n$, $n_2:=m$, $\bx_1:=\by$, and $\bx_2:=\bz$. It can also be shown that the inequality \eqref{help_mean_prop_csop} follows for general $k\in\NN$ from the particular case $k=2$ with induction with respect to $k$.
\proofend
\end{Rem}

Next, we prove a bisymmetry-type inequality for weighted generalized $\psi$-estimators.
Our result can be considered as a generalization of the bisymmetry inequality for weighted quasi-deviation means due to P\'ales \cite[Theorem 3.2]{Pal82a}.

\begin{Thm}[Bisymmetry-type inequality for weighted generalized $\psi$-estimators]\label{Thm_bisymmetry}
Let $n,m\in\NN$ and let $\psi\in\Psi[W_n,W_m](X,\Theta)$.
Then, for all $x_{i,j}\in X$, $\lambda_{i,j}\in\RR_+$, $i\in\{1,\dots,n\}$, $j\in\{1,\dots,m\}$ with
\Eq{lambda-positivity}{
 \sum_{i=1}^n \lambda_{i,j}>0,
 \quad j\in\{1,\dots,m\} \qquad\mbox{and}\qquad
 \sum_{j=1}^m \lambda_{i,j}>0,
 \quad i\in\{1,\dots,n\},
}
we have
\Eq{*}{
  \min_{i\in\{1,\dots,n\}}
  \vartheta_m^{(\lambda_{i,1},\dots,\lambda_{i,m})}(x_{i,1},\dots,x_{i,m})
  \leq \max_{j\in\{1,\dots,m\}}
  \vartheta_n^{(\lambda_{1,j},\dots,\lambda_{n,j})}(x_{1,j},\dots,x_{n,j}).
}
\end{Thm}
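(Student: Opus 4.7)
The plan is to argue by contradiction. Write $\mu_i := \vartheta_m^{(\lambda_{i,1},\dots,\lambda_{i,m})}(x_{i,1},\dots,x_{i,m})$ for $i \in \{1,\dots,n\}$ (the row-wise estimators) and $\nu_j := \vartheta_n^{(\lambda_{1,j},\dots,\lambda_{n,j})}(x_{1,j},\dots,x_{n,j})$ for $j \in \{1,\dots,m\}$ (the column-wise estimators). The positivity conditions in \eqref{lambda-positivity} exactly guarantee that $(\lambda_{i,1},\dots,\lambda_{i,m}) \in \Lambda_m$ for every $i$ and $(\lambda_{1,j},\dots,\lambda_{n,j}) \in \Lambda_n$ for every $j$, so the hypothesis $\psi\in\Psi[W_n,W_m](X,\Theta)$ makes each $\mu_i$ and each $\nu_j$ well-defined elements of $\Theta$.

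Suppose for contradiction that $\max_{j} \nu_j < \min_{i} \mu_i$. Since $\Theta$ is a nondegenerate open interval containing both these values, there exists $t \in \Theta$ with $\max_{j}\nu_j < t < \min_{i}\mu_i$. For each fixed $i$, the strict inequality $t < \mu_i$ together with the $T_m^{(\lambda_{i,1},\dots,\lambda_{i,m})}$-property of $\psi$ yields
\Eq{*}{
  \sum_{j=1}^m \lambda_{i,j}\,\psi(x_{i,j},t) > 0.
}
Analogously, for each fixed $j$, the strict inequality $t > \nu_j$ combined with the $T_n^{(\lambda_{1,j},\dots,\lambda_{n,j})}$-property gives
\Eq{*}{
  \sum_{i=1}^n \lambda_{i,j}\,\psi(x_{i,j},t) < 0.
}

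Summing the first family of inequalities over $i \in \{1,\dots,n\}$ and the second family over $j \in \{1,\dots,m\}$, one obtains
\Eq{*}{
  \sum_{i=1}^n \sum_{j=1}^m \lambda_{i,j}\,\psi(x_{i,j},t) > 0
  \qquad\mbox{and}\qquad
  \sum_{j=1}^m \sum_{i=1}^n \lambda_{i,j}\,\psi(x_{i,j},t) < 0,
}
which contradict each other since the two double sums are equal as finite sums. This contradiction completes the proof.

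The argument is genuinely short; there is no real obstacle beyond bookkeeping. The only subtle point is to verify that the two weight vectors extracted from the matrix $(\lambda_{i,j})$ really lie in $\Lambda_m$ and $\Lambda_n$ respectively, which is precisely why the hypothesis \eqref{lambda-positivity} is formulated with strict positivity of the row and column sums rather than mere nonnegativity of the individual entries. Note that continuity of $\psi$ in its second variable is not needed: we use only the sign-change characterization built into the $W_n$ and $W_m$ properties, and the existence of an intermediate $t$ uses only that $\Theta$ is an open interval.
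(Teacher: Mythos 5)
Your proof is correct and is essentially identical to the paper's own argument: the same contradiction setup, the same choice of an intermediate $t$, the same use of the sign-change property row-wise and column-wise, and the same double-sum contradiction. The extra remark about why \eqref{lambda-positivity} places the extracted weight vectors in $\Lambda_m$ and $\Lambda_n$ is a nice touch that the paper leaves implicit.
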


\begin{proof} To the contrary of the assertion, assume that
\Eq{*}{
  \min_{i\in\{1,\dots,n\}}
  \vartheta_m^{(\lambda_{i,1},\dots,\lambda_{i,m})}(x_{i,1},\dots,x_{i,m})
  > \max_{j\in\{1,\dots,m\}}
  \vartheta_n^{(\lambda_{1,j},\dots,\lambda_{n,j})}(x_{1,j},\dots,x_{n,j}).
}
holds for some $x_{i,j}\in X$, $\lambda_{i,j}\in\RR_+$, $i\in\{1,\dots,n\}$, $j\in\{1,\dots,m\}$ satisfying \eq{lambda-positivity}. Then there exists $t\in\Theta$ such that, for all $i\in\{1,\dots,n\}$ and $j\in\{1,\dots,m\}$,
\Eq{*}{
   \vartheta_m^{(\lambda_{i,1},\dots,\lambda_{i,m})}(x_{i,1},\dots,x_{i,m})
   >t>\vartheta_n^{(\lambda_{1,j},\dots,\lambda_{n,j})}(x_{1,j},\dots,x_{n,j}).
}
By our assumption, $\psi$ is a $T_n^{\blambda}$- and a $T_m^{\bmu}$-function for all $\blambda\in\Lambda_n$ and $\bmu\in\Lambda_m$, respectively. Therefore, for all $i\in\{1,\dots,n\}$ and $j\in\{1,\dots,m\}$, we get that
\Eq{*}{
  \sum_{\beta=1}^m \lambda_{i,\beta}\psi(x_{i,\beta},t)>0\qquad\mbox{and}\qquad
  0>\sum_{\alpha=1}^n \lambda_{\alpha,j}\psi(x_{\alpha,j},t).
}
Summing up the first and second inequalities side by side with respect to $i\in\{1,\dots,n\}$ and  $j\in\{1,\dots,m\}$, respectively, we can conclude that
\Eq{*}{
  \sum_{i=1}^n\sum_{\beta=1}^m \lambda_{i,\beta}\psi(x_{i,\beta},t)>0 \qquad\mbox{and}\qquad
  0>\sum_{j=1}^m\sum_{\alpha=1}^n \lambda_{\alpha,j}\psi(x_{\alpha,j},t),
}
which leads us to an obvious contradiction.
\end{proof}

With the same argument, taking $\lambda_{i,j}:=1$ for $i\in\{1,\dots,n\}$ and  $j\in\{1,\dots,m\}$,
 one can also verify an analogous result for non-weighted generalized $\psi$-estimators.

\begin{Thm}[Bisymmetry-type inequality for generalized $\psi$-estimators]\label{Thm_bisymmetry2}
Let $n,m\in\NN$, and let $\psi\in\Psi[T_n,T_m](X,\Theta)$. Then, for all $x_{i,j}\in X$, $i\in\{1,\dots,n\}$, $j\in\{1,\dots,m\}$,
\Eq{*}{
  \min_{i\in\{1,\dots,n\}}
  \vartheta_m(x_{i,1},\dots,x_{i,m})
  \leq \max_{j\in\{1,\dots,m\}}
  \vartheta_n(x_{1,j},\dots,x_{n,j}).
}
\end{Thm}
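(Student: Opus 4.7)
My plan is to mimic exactly the argument given for Theorem \ref{Thm_bisymmetry}, but in the simpler non-weighted setting, where the weights $\lambda_{i,j}$ are all taken equal to $1$. So the entire hypothesis reduces to $\psi\in\Psi[T_n,T_m](X,\Theta)$, and the positivity conditions \eq{lambda-positivity} become trivial (they just say $n,m\geq 1$).

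I would argue by contradiction: assume the strict reverse inequality
\[
  \min_{i\in\{1,\dots,n\}}
  \vartheta_m(x_{i,1},\dots,x_{i,m})
  > \max_{j\in\{1,\dots,m\}}
  \vartheta_n(x_{1,j},\dots,x_{n,j})
\]
holds. Then the open interval between the right and left hand sides is a nonempty subinterval of $\Theta$ (using that $\Theta$ is an open interval in $\RR$ and both sides already belong to $\Theta$), so we can pick some $t\in\Theta$ satisfying
\[
   \vartheta_m(x_{i,1},\dots,x_{i,m}) > t > \vartheta_n(x_{1,j},\dots,x_{n,j})
\]
for every $i\in\{1,\dots,n\}$ and every $j\in\{1,\dots,m\}$ simultaneously.

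Next I would invoke the defining sign-change property of the $T_m$- and $T_n$-function $\psi$: for each fixed $i$, the first inequality yields
\[
  \sum_{\beta=1}^m \psi(x_{i,\beta},t) > 0,
\]
and for each fixed $j$, the second inequality yields
\[
  \sum_{\alpha=1}^n \psi(x_{\alpha,j},t) < 0.
\]
Summing the $n$ inequalities of the first type over $i\in\{1,\dots,n\}$, and separately the $m$ inequalities of the second type over $j\in\{1,\dots,m\}$, I obtain
\[
  \sum_{i=1}^n\sum_{\beta=1}^m \psi(x_{i,\beta},t) > 0
  \qquad\text{and}\qquad
  \sum_{j=1}^m\sum_{\alpha=1}^n \psi(x_{\alpha,j},t) < 0.
\]
These two double sums are identical after relabeling the indices of summation, so we reach the contradiction $0<0$, which proves the claim.

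There is no real obstacle here; this is a verbatim specialization of the proof of Theorem \ref{Thm_bisymmetry}. The only mild subtlety is checking that the strict reverse inequality really produces a usable $t\in\Theta$, but that follows immediately from $\Theta$ being an interval together with the fact that both extremes of the inequality lie in $\Theta$ by definition of $\vartheta_m$ and $\vartheta_n$. Alternatively, one could note that $[T_n]$ and $[T_m]$ are the particular cases $[T_n^{(1,\dots,1)}]$ and $[T_m^{(1,\dots,1)}]$ of the weighted properties used in Theorem~\ref{Thm_bisymmetry}, so the statement is literally the specialization $\lambda_{i,j}\equiv 1$ of that theorem.
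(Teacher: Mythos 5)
Your proposal is correct and is exactly the paper's approach: the authors themselves prove this theorem by remarking that it follows ``with the same argument'' as Theorem~\ref{Thm_bisymmetry} upon taking $\lambda_{i,j}:=1$, which is precisely the contradiction argument you write out. One minor caveat: your closing ``alternative'' of quoting Theorem~\ref{Thm_bisymmetry} as a black box does not quite work, since that theorem assumes $\psi\in\Psi[W_n,W_m](X,\Theta)$ rather than merely $[T_n^{(1,\dots,1)}]$ and $[T_m^{(1,\dots,1)}]$, but your main argument is unaffected because it reruns the proof using only the $[T_n]$ and $[T_m]$ properties.
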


\begin{Cor} {\bf (2-variable bisymmetry-type inequality for weighted generalized $\psi$-esti\-mators)}
Let $\psi\in\Psi[W_2](X,\Theta)$. Then we have
 \begin{align}\label{ineq_22_bisymmeytry}
  \min( \vartheta_{2}^{(\alpha,\beta)}(x,y), \vartheta_{2}^{(\gamma,\delta)}(u,v))
      \leq \max(\vartheta_{2}^{(\alpha,\gamma)}(x,u), \vartheta_{2}^{(\beta,\delta)}(y,v))
 \end{align}
 for all $x,y,u,v\in X$, $\alpha,\beta,\gamma,\delta\in\RR_+$ such that $\alpha+\beta>0$,
 $\gamma+\delta> 0$, \ $\alpha+\gamma>0$ and $\beta+\delta> 0$.
\end{Cor}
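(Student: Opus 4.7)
The plan is to derive this corollary as a direct specialization of Theorem~\ref{Thm_bisymmetry} to the case $n=m=2$. Since the hypothesis $\psi\in\Psi[W_2](X,\Theta)$ is exactly the condition $\psi\in\Psi[W_n,W_m](X,\Theta)$ when $n=m=2$, the theorem is immediately applicable, and the entire task reduces to matching the indices and double-checking the positivity conditions.

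Concretely, I would set $x_{1,1}:=x$, $x_{1,2}:=y$, $x_{2,1}:=u$, $x_{2,2}:=v$, and $\lambda_{1,1}:=\alpha$, $\lambda_{1,2}:=\beta$, $\lambda_{2,1}:=\gamma$, $\lambda_{2,2}:=\delta$. With these identifications, the column-sum conditions $\sum_{i=1}^n\lambda_{i,j}>0$ of \eqref{lambda-positivity} become $\alpha+\gamma>0$ (for $j=1$) and $\beta+\delta>0$ (for $j=2$), while the row-sum conditions $\sum_{j=1}^m\lambda_{i,j}>0$ become $\alpha+\beta>0$ (for $i=1$) and $\gamma+\delta>0$ (for $i=2$). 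These are precisely the four hypotheses imposed on $\alpha,\beta,\gamma,\delta$ in the corollary, so the conditions of Theorem~\ref{Thm_bisymmetry} are satisfied.

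Applying the theorem then gives
\[
 \min_{i\in\{1,2\}}\vartheta_2^{(\lambda_{i,1},\lambda_{i,2})}(x_{i,1},x_{i,2})
 \leq
 \max_{j\in\{1,2\}}\vartheta_2^{(\lambda_{1,j},\lambda_{2,j})}(x_{1,j},x_{2,j}),
\]
which, upon substituting back the named variables, is exactly the inequality \eqref{ineq_22_bisymmeytry}. There is no genuine obstacle here; the only thing to be careful about is the correct pairing between the rows/columns of the array $(\lambda_{i,j})$ and the two-variable weight vectors appearing in \eqref{ineq_22_bisymmeytry} (in particular, that the left-hand side corresponds to row-wise estimators while the right-hand side corresponds to column-wise estimators).
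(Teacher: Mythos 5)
Your proposal is correct and coincides with the paper's own proof: the authors likewise apply Theorem~\ref{Thm_bisymmetry} with $n:=m:=2$, the identifications $x_{1,1}:=x$, $x_{1,2}:=y$, $x_{2,1}:=u$, $x_{2,2}:=v$ and $\lambda_{1,1}:=\alpha$, $\lambda_{1,2}:=\beta$, $\lambda_{2,1}:=\gamma$, $\lambda_{2,2}:=\delta$. Your explicit verification that the row- and column-sum conditions in \eqref{lambda-positivity} reduce exactly to the four positivity hypotheses of the corollary is a correct (and slightly more detailed) rendering of the same argument.
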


\begin{proof} Let $x,y,u,v\in X$ and $\alpha,\beta,\gamma,\delta\in\RR_+$ with $\alpha+\beta>0$,
 $\gamma+\delta>0$, $\alpha+\gamma>0$ and $\beta+\delta>0$ and apply Theorem \ref{Thm_bisymmetry}
 for $n:=m:=2$ with $x_{1,1}:=x$, $x_{1,2}:=y$, $x_{2,1}:=u$, $x_{2,2}:=v$, and
 $\lambda_{1,1}:=\alpha$, $\lambda_{1,2}:=\beta$, $\lambda_{2,1}:=\gamma$, $\lambda_{2,2}:=\delta$.
\end{proof}

\section{Asymptotic and regularity properties of generalized $\psi$-estimators}
\label{section_asy_con_prop}

Our first statement is a generalization of Lemma 1.7 in Barczy and P\'ales \cite{BarPal3}.
For its formulation, we introduce two further notations.
Firstly, for each $k,n\in\NN$ and $\bx\in X^n$, let $k\odot \bx$ denote the $(k\cdot n)$-tuple $(\bx,\ldots,\bx)\in (X^n)^k$.
Secondly, for each $n\in\NN$, $\bk=(k_1,\dots,k_n)\in\ZZ_+^n$ with $k_1+\dots+k_n>0$ and  $\bx=(x_1,\dots,x_n)\in X^{n}$, let $\bk\otimes\bx$ be defined by
\[
  \bk\otimes\bx
  =(k_1\odot x_1,\dots,k_n\odot x_n)
  \in X^{k_1+\dots+k_n}.
\]
Here we adopt the convention that if, for some $i\in\{1,\dots,n\}$, $k_i=0$, then $k_i\odot x_i$ is cancelled in the above formula.
Note that if $n\in\NN$, $\bx=(x_1,\dots,x_n)\in X^n$, $\bk:=(k,\dots,k)\in\NN^n$ with some $k\in\NN$,
 and $\psi$ has the property $[T_{nk}]$, then, by symmetry (see the paragraph after Definition \ref{Def_Tn}), we can easily see that
 \begin{align*}
    \vartheta_{nk}(\bk\otimes\bx) = \vartheta_{nk}(k\odot \bx).
 \end{align*}

\begin{Thm}\label{Thm_psi_est_eq_3_T}
Let $\psi\in\Psi[T](X,\Theta)$, $n\in\NN$ and let $\bk=(k_1,\dots,k_n),\bm=(m_1,\dots,m_n)\in\ZZ_+^n$ with $k_1+\dots+k_n>0$. Then, for all $\bx\in X^n$,
 we have
 \[
  \lim_{\ell\to\infty}\vartheta_{\ell(k_1+\dots+k_n)+m_1+\dots+m_n}((\ell\bk+\bm)\otimes\bx)
   =\vartheta_{k_1+\dots+k_n}(\bk\otimes\bx),
 \]
 where $\ell\bk := (\ell k_1,\ldots,\ell k_n)\in\ZZ_+^n$, $\ell\in\NN$.
\end{Thm}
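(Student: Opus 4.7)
Let $K := k_1+\dots+k_n$ and $M := m_1+\dots+m_n$, write $t_0 := \vartheta_K(\bk\otimes\bx)$ for the candidate limit, and set $\vartheta_\ell := \vartheta_{\ell K+M}((\ell\bk+\bm)\otimes\bx)$, which is well-defined since $\psi\in\Psi[T](X,\Theta)$. The plan rests on the identity
\[
\psi_{(\ell\bk+\bm)\otimes\bx}(t)
  = \sum_{i=1}^n (\ell k_i+m_i)\psi(x_i,t)
  = \ell f(t) + g(t),
\]
where $f(t):=\sum_{i=1}^n k_i\psi(x_i,t)=\psi_{\bk\otimes\bx}(t)$ and $g(t):=\sum_{i=1}^n m_i\psi(x_i,t)$; the function $g$ is the zero function when $\bm=\bzero$.

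Since $\Theta$ is open and $t_0\in\Theta$, I fix $\vare_0>0$ with $[t_0-\vare_0,t_0+\vare_0]\subseteq\Theta$. For any $\vare\in(0,\vare_0]$, the sign-change property of $t_0$ for $f$ yields $f(t_0-\vare)>0$ and $f(t_0+\vare)<0$, while $g(t_0-\vare)$ and $g(t_0+\vare)$ are fixed real numbers independent of $\ell$. Hence for all $\ell$ sufficiently large,
\[
\ell f(t_0-\vare)+g(t_0-\vare)>0
\qquad\text{and}\qquad
\ell f(t_0+\vare)+g(t_0+\vare)<0.
\]
The defining $T_{\ell K+M}$-property of the sign change point $\vartheta_\ell$ of $\psi_{(\ell\bk+\bm)\otimes\bx}$ then rules out $\vartheta_\ell<t_0-\vare$ from the first inequality and $\vartheta_\ell>t_0+\vare$ from the second, giving $|\vartheta_\ell-t_0|\leq\vare$ for all sufficiently large $\ell$. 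Since $\vare\in(0,\vare_0]$ was arbitrary, this yields $\vartheta_\ell\to t_0$, as claimed.

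The step carrying all the content is the trivial but essential observation that the fixed perturbation $g$ becomes negligible against $\ell f$ at any $t\ne t_0$; accordingly, I do not foresee a serious obstacle. It is worth flagging, however, that Theorem~\ref{Thm_psi_becsles_mean_prop} alone (viewing $(\ell\bk+\bm)\otimes\bx$ as the concatenation of $\ell$ copies of $\bk\otimes\bx$ and, when $M>0$, one copy of $\bm\otimes\bx$) only yields the $\ell$-independent bounds
\[
\min\bigl(\vartheta_K(\bk\otimes\bx),\vartheta_M(\bm\otimes\bx)\bigr)\,\leq\,\vartheta_\ell\,\leq\,\max\bigl(\vartheta_K(\bk\otimes\bx),\vartheta_M(\bm\otimes\bx)\bigr),
\]
which fail to contract to $t_0$; hence one must exploit the linear scaling in $\ell$ as above. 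The boundary case $\bm=\bzero$ is immediate, since then $\psi_{\ell\bk\otimes\bx}=\ell f$ has sign change point $t_0$ for every $\ell\in\NN$.
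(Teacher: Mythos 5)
Your proof is correct and follows essentially the same route as the paper: both decompose $\psi_{(\ell\bk+\bm)\otimes\bx}=\ell f+g$ (the paper factors out $\ell$ and writes $f(t')+\tfrac1\ell g(t')$), evaluate at points straddling $t_0$ where $f$ has a definite sign, and let the sign-change property of $\vartheta_\ell$ trap it in an arbitrarily small neighbourhood of $t_0$ for large $\ell$. The only cosmetic difference is that you use the symmetric points $t_0\pm\vare$ while the paper uses arbitrary $t'<t_0<t''$.
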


\begin{proof} Let $\bx=(x_1,\dots,x_n)\in X^n$ be fixed arbitrarily.
For each $\ell\in\NN$, denote
 \[
   t_\ell:=\vartheta_{\ell(k_1+\dots+k_n)+m_1+\dots+m_n}((\ell\bk+\bm)\otimes\bx).
 \]
We need to show that $t_\ell$ converges to $\vartheta_{k_1+\dots+k_n}(\bk\otimes\bx)$ as $\ell \to\infty$.

Let $t',t''\in\Theta$ be arbitrary such that $t'<\vartheta_{k_1+\dots+k_n}(\bk\otimes\bx)<t''$ (since $\Theta$ is open such values $t'$ and $t''$ exist). Then
 \[
  \sum_{i=1}^n(\ell k_i+m_i)\psi(x_i,t')
  =\ell\bigg(\sum_{i=1}^n k_i\psi(x_i,t')+\frac1\ell\sum_{i=1}^n m_i\psi(x_i,t')\bigg)>0
 \]
 if $\ell$ is large enough, because $t'<\vartheta_{k_1+\dots+k_n}(\bk\otimes\bx)$ and
 \[
   \lim_{\ell\to\infty}\bigg(\sum_{i=1}^n k_i\psi(x_i,t')+\frac1\ell\sum_{i=1}^n m_i\psi(x_i,t')\bigg)=\sum_{i=1}^n k_i\psi(x_i,t')>0.
 \]
Similarly, we get that the inequality
 \[
\sum_{i=1}^n(\ell k_i+m_i)\psi(x_i,t'')<0
 \]
 is valid if $\ell$ is large enough.
Therefore, the point of sign change $t_\ell$ of the function
 \[
  \Theta\ni t\mapsto \sum_{i=1}^n(\ell k_i+m_i)\psi(x_i,t)
 \]
is in the interval $[t',t'']$ for $\ell$ large enough, that is, there exists $\ell_0\in\NN$ such that $t_\ell\in[t',t'']$ for each $\ell\geq \ell_0$, $\ell\in\NN$.
Since $t',t''\in\Theta$ were arbitrary with $t'<\vartheta_{k_1+\dots+k_n}(\bk\otimes\bx)<t''$, this implies that $t_\ell$ converges to $\vartheta_{k_1+\dots+k_n}(\bk\otimes\bx)$ as $\ell\to\infty$.
\end{proof}

Next, we formulate a corollary of Theorem \ref{Thm_psi_est_eq_3_T}, which sheds light
 on the following feature of generalized $\psi$-estimators.
Given $n,m,\ell\in\NN$, suppose that one observes $\bz\in X^m$ and then $\by\in X^n$ repeatedly $\ell$-times.
In this case, roughly speaking, the generalized $\psi$-estimator does not essentially depend on $\bz\in X^m$ in the sense that its effect disappears as $\ell\to\infty$,
 for a detailed statement, see Corollary \ref{Cor_psi_est_eq_3_T} below.
One could say that, in this scenario, the observation $\bz$ can be considered asymptotically as an outlier.

\begin{Cor}\label{Cor_psi_est_eq_3_T}
Let $\psi\in\Psi[T](X,\Theta)$ and $n,m\in\NN$.
Then, for all $\by\in X^n$ and $\bz\in X^m$, we have
 \[
   \lim_{\ell\to\infty}\vartheta_{\ell n+m}(\ell\odot\by,\bz) = \vartheta_{n}(\by).
 \]
\end{Cor}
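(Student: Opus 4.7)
The plan is to derive the corollary directly from Theorem \ref{Thm_psi_est_eq_3_T} by choosing the right $\bk$, $\bm$, and $\bx$. The only mild subtlety is that the concatenation $(\ell\odot\by,\bz)$ groups the repeated copies of $\by$ block-by-block, whereas $(\ell\bk+\bm)\otimes\bx$ (with the obvious choice below) groups repetitions coordinate-by-coordinate, so I will invoke the symmetry property of $\vartheta_{n,\psi}$ recorded after Definition \ref{Def_Tn} to reconcile these.

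Concretely, write $\by=(y_1,\dots,y_n)$ and $\bz=(z_1,\dots,z_m)$, and set $N:=n+m$,
\[
  \bx:=(y_1,\dots,y_n,z_1,\dots,z_m)\in X^{N},\quad
  \bk:=(\underbrace{1,\dots,1}_{n},\underbrace{0,\dots,0}_{m})\in\ZZ_+^{N},\quad
  \bm:=(\underbrace{0,\dots,0}_{n},\underbrace{1,\dots,1}_{m})\in\ZZ_+^{N}.
\]
Then $k_1+\dots+k_N=n>0$, $m_1+\dots+m_N=m$, so $\ell(k_1+\dots+k_N)+m_1+\dots+m_N=\ell n+m$. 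Moreover $\bk\otimes\bx=\by$ (the zero entries of $\bk$ cancel the $z_j$'s, by the convention stated before Theorem \ref{Thm_psi_est_eq_3_T}), and
\[
  (\ell\bk+\bm)\otimes\bx
  =(\underbrace{y_1,\dots,y_1}_{\ell},\dots,\underbrace{y_n,\dots,y_n}_{\ell},z_1,\dots,z_m).
\]
This tuple is a permutation of $(\ell\odot\by,\bz)$, and since $\psi\in\Psi[T](X,\Theta)$ is in particular a $T_{\ell n+m}$-function, the symmetry of $\vartheta_{\ell n+m,\psi}$ gives
\[
  \vartheta_{\ell n+m}((\ell\bk+\bm)\otimes\bx)=\vartheta_{\ell n+m}(\ell\odot\by,\bz).
\]

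Applying Theorem \ref{Thm_psi_est_eq_3_T} with the above $\bx$, $\bk$, $\bm$ yields
\[
  \lim_{\ell\to\infty}\vartheta_{\ell n+m}((\ell\bk+\bm)\otimes\bx)
  =\vartheta_{n}(\bk\otimes\bx)=\vartheta_{n}(\by),
\]
and combining with the previous identity gives the claim. The argument is essentially bookkeeping: the only place one must be careful is the matching between the two different ``repetition'' notations $\ell\odot\by$ and $(\ell\bk+\bm)\otimes\bx$, which is handled by the symmetry remark.
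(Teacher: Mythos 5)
Your proposal is correct and follows essentially the same route as the paper's own proof: the same choices $\bx=(\by,\bz)$, $\bk=((1,\dots,1),(0,\dots,0))$, $\bm=((0,\dots,0),(1,\dots,1))$, and the same appeal to the symmetry of $\vartheta_{\ell n+m,\psi}$ to identify $(\ell\bk+\bm)\otimes\bx$ with $(\ell\odot\by,\bz)$. The bookkeeping is handled carefully and nothing is missing.
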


\begin{proof}
Let us apply Theorem \ref{Thm_psi_est_eq_3_T} with $\bx:=(\by,\bz)\in X^n\times X^m$,
 \[
  \bk:=\big( (1,\ldots,1),(0,\ldots,0) \big)\in\ZZ_+^n\times\ZZ_+^m
   \quad \text{and} \quad \bm:=\big( (0,\ldots,0),(1,\ldots,1)\big)\in\ZZ_+^n\times\ZZ_+^m.
 \]
Then, for each $\ell\in\NN$, we have
 \begin{align*}
      &\vartheta_{\ell(k_1+\dots+k_{n+m})+m_1+\dots+m_{n+m}}((\ell\bk+\bm)\otimes\bx)
        =  \vartheta_{\ell n+m}(  \ell\odot x_1,\ldots,\ell\odot x_n, x_{n+1},\ldots,x_{n+m} ) \\
      &\qquad = \vartheta_{\ell n+m}(  \ell\odot y_1,\ldots,\ell\odot y_n, z_1,\ldots,z_m  )
                    = \vartheta_{\ell n+m}(\ell\odot\by,\bz),
 \end{align*}
where the last equality follows from symmetry (see the paragraph after Definition \ref{Def_Tn}).
Hence Theorem \ref{Thm_psi_est_eq_3_T} yields that
  \[
    \lim_{\ell\to\infty}\vartheta_{\ell n+m}(\ell\odot\by,\bz)
      = \vartheta_{k_1+\dots+k_{n+m}}(\bk\otimes\bx)
      = \vartheta_{n}(x_1,\ldots,x_n)
      = \vartheta_{n}(y_1,\ldots,y_n)
      = \vartheta_{n}(\by).
  \]
\end{proof}

Note that, given $n,m\in\NN$, Corollary \ref{Cor_psi_est_eq_3_T} remains valid if, instead of $\psi\in\Psi[T](X,\Theta)$,
 we only suppose that $\psi$ has the properties $[T_n]$ and $[T_{\ell n+m}]$ for all $\ell\in\NN$.

\begin{Rem}\label{Rem1}
Corollary \ref{Cor_psi_est_eq_3_T} also implies Theorem \ref{Thm_psi_est_eq_3_T}.
Let $\psi\in\Psi[T](X,\Theta)$, $n\in\NN$, $\bk:=(k_1,\dots,k_n),\bm:=(m_1,\dots,m_n)\in\ZZ_+^n$ with $k_1+\dots+k_n>0$.
Applying Corollary \ref{Cor_psi_est_eq_3_T} with $\by:=\bk\otimes\bx\in X^{k_1+\cdots + k_n}$
 and $\bz:=\bm\otimes\bx\in X^{m_1+\cdots+m_n}$, we have
 \[
   \lim_{\ell\to\infty}\vartheta_{\ell(k_1+\dots+k_n)+m_1+\dots+m_n}( \ell\odot\by,\bz)
        =\vartheta_{k_1+\dots+k_n}(\by).
 \]
This yields Theorem \ref{Thm_psi_est_eq_3_T}, since, by symmetry (see the paragraph after Definition \ref{Def_Tn}),
 we have
 \begin{align*}
    \vartheta_{\ell(k_1+\dots+k_n)+m_1+\dots+m_n}( \ell\odot\by,\bz)
    & = \vartheta_{\ell(k_1+\dots+k_n)+m_1+\dots+m_n}( \ell\odot(\bk\otimes\bx),\bm\otimes\bx) \\
    & = \vartheta_{\ell(k_1+\dots+k_n)+m_1+\dots+m_n}( (\ell\bk +\bm) \otimes \bx).
 \end{align*}
\proofend
\end{Rem}

In the next result we establish the continuous dependence of the generalized $\psi$-estimators with respect to weights.

\begin{Thm}\label{Thm_psi_est_eq_3}
Let $n\in\NN$ and $\psi\in\Psi[W_n](X,\Theta)$.
Then, for all $\bx\in X^n$, the function
 \[
   \Lambda_n\ni\blambda\mapsto\vartheta_n^{\blambda}(\bx)\in\Theta
 \]
 is continuous.
\end{Thm}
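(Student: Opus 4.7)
The plan is to mimic the \textquotedblleft sandwich\textquotedblright\ argument used in the proof of Theorem~\ref{Thm_psi_est_eq_3_T}. Fix $\bx=(x_1,\dots,x_n)\in X^n$ and a reference point $\blambda^*=(\lambda_1^*,\dots,\lambda_n^*)\in\Lambda_n$, and let $t^*:=\vartheta_n^{\blambda^*}(\bx)\in\Theta$. Since $\Lambda_n$ is a subset of $\RR^n$ (equipped with the subspace topology), it suffices to establish sequential continuity: given an arbitrary sequence $\blambda^{(k)}=(\lambda_1^{(k)},\dots,\lambda_n^{(k)})\in\Lambda_n$ with $\blambda^{(k)}\to\blambda^*$ as $k\to\infty$, I will show that $\vartheta_n^{\blambda^{(k)}}(\bx)\to t^*$.

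The key observation is that, because $\Theta$ is open, we may pick arbitrary $t',t''\in\Theta$ with $t'<t^*<t''$. By the defining property of $\vartheta_n^{\blambda^*}(\bx)$ as a point of sign change for $\psi_{\bx,\blambda^*}$, we have
\[
  \sum_{i=1}^n \lambda_i^* \psi(x_i,t')>0 \qquad\mbox{and}\qquad \sum_{i=1}^n \lambda_i^* \psi(x_i,t'')<0.
\]
Since each of these sums depends linearly (hence continuously) on the weight vector, for all sufficiently large $k$ the strict inequalities are preserved:
\[
  \sum_{i=1}^n \lambda_i^{(k)} \psi(x_i,t')>0 \qquad\mbox{and}\qquad \sum_{i=1}^n \lambda_i^{(k)} \psi(x_i,t'')<0.
\]
Using the property $[T_n^{\blambda^{(k)}}]$ (available because $\psi\in\Psi[W_n](X,\Theta)$ and $\blambda^{(k)}\in\Lambda_n$) together with the contrapositive of the sign change definition, these inequalities force $t'\leq \vartheta_n^{\blambda^{(k)}}(\bx)\leq t''$ for all sufficiently large $k$.

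Finally, since $t',t''$ were arbitrary subject to $t'<t^*<t''$, taking $t'\uparrow t^*$ and $t''\downarrow t^*$ yields $\vartheta_n^{\blambda^{(k)}}(\bx)\to t^*$, as required. There is no genuine obstacle in the argument; the only point requiring slight care is the passage from the strict inequalities $\psi_{\bx,\blambda^{(k)}}(t')>0$ and $\psi_{\bx,\blambda^{(k)}}(t'')<0$ to the bounds $t'\leq\vartheta_n^{\blambda^{(k)}}(\bx)\leq t''$, which uses the contrapositive of the two cases in the definition of a $T_n^{\blambda^{(k)}}$-function (if $\vartheta_n^{\blambda^{(k)}}(\bx)<t'$ then $\psi_{\bx,\blambda^{(k)}}(t')<0$, contradicting positivity, and symmetrically at $t''$).
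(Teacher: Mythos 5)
Your proposal is correct and follows essentially the same route as the paper's proof: both fix $\bx$, reduce to sequential continuity, bracket the limit value $t^*$ by $t'<t^*<t''$, use the continuity (linearity) of $\blambda\mapsto\sum_{i=1}^n\lambda_i\psi(x_i,t)$ to preserve the strict sign inequalities at $t'$ and $t''$ for large $k$, and then deduce $t'\leq\vartheta_n^{\blambda^{(k)}}(\bx)\leq t''$ from the definition of a point of sign change. The only cosmetic difference is that the paper phrases the conclusion with an explicit $\varepsilon$ rather than letting $t'\uparrow t^*$ and $t''\downarrow t^*$, which is equivalent.
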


\begin{proof}
Let $n\in\NN$ and $\bx=(x_1,\ldots,x_n)\in X^n$ be fixed arbitrarily,
 and define the function $f:\Lambda_n\to\Theta$ by $f(\blambda):=\vartheta_n^{\blambda}(\bx)$, $\blambda\in\Lambda_n$.
Then, we have
\begin{align}\label{help_equality_1}
   F(\blambda,t):=\sum_{i=1}^n \lambda_i\psi(x_i,t)
     \begin{cases}
        >0  & \text{if $t<f(\blambda)$, $t\in\Theta$,}\\
        <0  & \text{if $t>f(\blambda)$, $t\in\Theta$}
     \end{cases}
 \end{align}
 for all $\blambda:=(\lambda_1,\ldots,\lambda_n)\in\Lambda_n$.
Of course, we have $F(\blambda,t)=\psi_{\pmb{x},\pmb{\lambda}}(t)$, $t\in\Theta$, $\blambda\in\Lambda_n$, where $\psi_{\pmb{x},\pmb{\lambda}}$ is introduced in \eqref{help_mean_prop_csop}.
Observe that, for all $t\in\Theta$, the mapping $\Lambda_n\ni\blambda\mapsto F(\blambda,t)$ is continuous,
 i.e., $F$ is continuous in its first variable.

To prove the continuity of $f$ at a point $\blambda_0\in\Lambda_n$, it suffices to show that,
 for all sequences $(\blambda_k)_{k\in\NN}$ in $\Lambda_n$ converging to $\blambda_0$, we have that $f(\blambda_k)\to f(\blambda_0)$ as $k\to\infty$.
Let $\varepsilon>0$ be arbitrary and $(\blambda_k)_{k\in\NN}$ be an arbitrary sequence in $\Lambda_n$ converging to $\blambda_0$.
Choose $t',t''\in \Theta$ such that $f(\blambda_0)-\varepsilon<t'<f(\blambda_0) <t''<f(\blambda_0)+\varepsilon$.
Such choices of $t'$ and $t''$ are possible, since $f(\blambda_0)\in\Theta$ and $\Theta$ is an open set.
By \eqref{help_equality_1}, we have that $F(\blambda_0,t')>0>F(\blambda_0,t'')$.
By the continuity of $F$ in its first variable, we can find a $k_0\in\NN$ such that,
 for all $k\geq k_0$, the inequalities $F(\blambda_k,t')>0>F(\blambda_k,t'')$ are valid.
Using the definition of $f(\blambda_k)$, these inequalities yield that
$f(\blambda_0)-\varepsilon <t'\leq f(\blambda_k) \leq t''<f(\blambda_0)+\varepsilon$,
 i.e., $|f(\blambda_k)-f(\blambda_0)|<\varepsilon$ for $k\geq k_0$.
\end{proof}

\begin{Rem}
In case of $\psi\in\Psi[W](X,\Theta)$, Theorem \ref{Thm_psi_est_eq_3} implies Corollary \ref{Cor_psi_est_eq_3_T}
 (and hence, in view of Remark \ref{Rem1}, Theorem \ref{Thm_psi_est_eq_3_T} as well).
Let $n,m\in\NN$, $\by\in X^n$ and $\bz\in X^m$.
Since
 \[
    \sum_{i=1}^n \ell\psi(y_i,\vartheta) + \sum_{j=1}^n \psi(z_j,\vartheta)
      = \ell \left(\sum_{i=1}^n \psi(y_i,\vartheta) + \frac{1}{\ell}\sum_{j=1}^n \psi(z_j,\vartheta)\right),
      \qquad \vartheta\in\Theta,\;\; \ell\in\NN,
 \]
 we have
 \[
   \vartheta_{\ell n+m}(\ell\odot\by,\bz) = \vartheta_{n+m}^{\blambda_\ell}(\by,\bz), \qquad \ell\in\NN,
 \]
 where
 \[
   \blambda_\ell:= \Big(\underbrace{1,\ldots,1}_{n}, \underbrace{\frac{1}{\ell},\ldots,\frac{1}{\ell}}_{m}\Big)
                \to \big(\underbrace{1,\ldots,1}_{n}, \underbrace{0,\ldots,0}_{m}\big) \in\ZZ_+^{n+m} \qquad \text{as $\ell\to\infty$.}
 \]
Consequently, Theorem \ref{Thm_psi_est_eq_3} implies  Corollary \ref{Cor_psi_est_eq_3_T}.
\proofend
\end{Rem}

Next, we introduce the notion of sensitivity for estimators. We note that, in the field of means, some researchers call this property infinitesimality
 (for more details, see P\'ales \cite[page 67]{Pal1984}). Since the concept of infinitesimality has a standard meaning in probability theory,
 we decided to change the terminology that comes from the theory of means.

\begin{Def}\label{Def_sensitivity}
Let $X$ be a nonempty set, $\Theta$ be a nondegenerate open interval of $\RR$, and $M:\bigcup_{n=1}^\infty X^n\to\Theta$ be a function.
 We say that $M$ possesses the property of sensitivity (briefly, $M$ is sensitive) if, for all $x,y\in X$, $u,v\in\Theta$
 with $M_1(x)<u<v<M_1(y)$, there exist $k,m\in\NN$ such that
 \[
  u < M_{k+m}(k\odot x, m\odot y) < v,
 \]
 where, for each $n\in\NN$, $M_n$ denotes the restriction of $M$ to $X^n$.
\end{Def}

Next, we prove that the property sensitivity holds for generalized $\psi$-estimators.
In fact, we prove a more general result by generalizing Theorem 3 in P\'ales \cite{Pal1984}, which is about quasi-deviation means.

\begin{Thm}\label{Thm_psi_est_infinit}
Let $N\in\NN$, let $\Theta_0,\Theta_1,\dots,\Theta_N$ be nondegenerate open intervals of $\RR$. For $i\in\{1,\dots,N\}$, let $\psi_i\in\Psi[T,W_2](X,\Theta_i)$ and let $g:\Theta_1\times\dots\times\Theta_N\to\Theta_0$ be a continuous function. Then the function $M:\bigcup_{n=1}^\infty X^n\to\Theta_0$ defined by
 \[
    M(x_1,\ldots,x_n) := g\big( \vartheta_{n,\psi_1}(x_1,\ldots,x_n), \ldots, \vartheta_{n,\psi_N}(x_1,\ldots,x_n)\big),
     \qquad n\in\NN,\quad x_1,\ldots,x_n\in X,
 \]
possesses the property of sensitivity.
\end{Thm}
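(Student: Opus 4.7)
The plan is to reduce the sensitivity property to an intermediate value argument applied to a one-parameter family of weighted generalized $\psi$-estimators. Fix $x,y\in X$ and $u,v\in\Theta_0$ with $M_1(x)<u<v<M_1(y)$. For each $i\in\{1,\dots,N\}$ I define $h_i:[0,1]\to\Theta_i$ by $h_i(s):=\vartheta_{2,\psi_i}^{(s,1-s)}(x,y)$. Since $(s,1-s)\in\Lambda_2$ for every $s\in[0,1]$ and $\psi_i\in\Psi[W_2](X,\Theta_i)$, this is well-defined, and by Theorem \ref{Thm_psi_est_eq_3} it is continuous on the compact segment $\{(s,1-s):s\in[0,1]\}\subseteq\Lambda_2$. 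I then set
\[
\phi(s):=g(h_1(s),\dots,h_N(s)),\qquad s\in[0,1],
\]
which is continuous by the continuity of $g$.

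Next I would evaluate $\phi$ at the endpoints. At $s=1$ the function $t\mapsto 1\cdot\psi_i(x,t)+0\cdot\psi_i(y,t)=\psi_i(x,t)$ has point of sign change $\vartheta_{1,\psi_i}(x)$, hence $h_i(1)=\vartheta_{1,\psi_i}(x)$; analogously $h_i(0)=\vartheta_{1,\psi_i}(y)$. Consequently $\phi(1)=M_1(x)<u$ and $\phi(0)=M_1(y)>v$. Applying the intermediate value theorem, there exists $s_0\in(0,1)$ with $\phi(s_0)=(u+v)/2\in(u,v)$, and by continuity of $\phi$ at $s_0$ some open interval $I\subseteq(0,1)$ containing $s_0$ satisfies $\phi(I)\subseteq(u,v)$.

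The last step is to convert the continuous parameter $s\in I$ into a pair of positive integer weights. Every rational number in $(0,1)$ has the form $k/(k+m)$ with $k,m\in\NN$ (given $p/q\in(0,1)$, take $k:=p$ and $m:=q-p$), so such rationals are dense in $(0,1)$, and I can pick $k,m\in\NN$ with $s:=k/(k+m)\in I$. Using the identity stated in the paragraph following Definition \ref{Def_Tn}, together with the null-homogeneity of the weighted generalized $\psi$-estimator in its weights, I obtain
\[
\vartheta_{k+m,\psi_i}(k\odot x,m\odot y)=\vartheta_{2,\psi_i}^{(k,m)}(x,y)=\vartheta_{2,\psi_i}^{(s,1-s)}(x,y)=h_i(s),
\]
so $M_{k+m}(k\odot x,m\odot y)=\phi(s)\in(u,v)$, as required.

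The main obstacle, such as it is, consists in verifying that each $h_i$ is continuous at the endpoints $s=0,1$ and takes the claimed values there; this is precisely where the assumption $\psi_i\in\Psi[W_2]$ is used, since it ensures that $(1,0),(0,1)\in\Lambda_2$ are admissible weight vectors and that Theorem \ref{Thm_psi_est_eq_3} applies on the whole segment. Once this is in place, the remaining ingredients (intermediate value theorem, density of rationals of the form $k/(k+m)$, and the null-homogeneity identity) combine in a purely topological way.
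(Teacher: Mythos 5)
Your proof is correct and follows essentially the same route as the paper: continuity of $\lambda\mapsto\vartheta_{2,\psi_i}^{\blambda}(x,y)$ via Theorem \ref{Thm_psi_est_eq_3}, composition with $g$, an intermediate-value/openness argument to locate a rational parameter, and the null-homogeneity identity $\vartheta_{k+m,\psi_i}(k\odot x,m\odot y)=\vartheta_{2,\psi_i}^{(k,m)}(x,y)$ to convert it into integer weights. The only difference is the cosmetic reversal of the parametrization (the paper uses $(1-\lambda,\lambda)$ so that the endpoint values are $M_1(x)$ at $0$ and $M_1(y)$ at $1$).
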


\begin{proof}
First, we show that if $\psi\in\Psi[T,W_2](X,\Theta)$ (where $\Theta$ is a nondegenerate open interval of $\RR$),
 then, for all $x,y\in X$, define $e:[0,1]\to\Theta$ by
\[
  e(\lambda):=\vartheta_{2,\psi}^{(1-\lambda,\lambda)}(x,y), \qquad \lambda\in[0,1].
\]
By applying Theorem \ref{Thm_psi_est_eq_3} for $n=2$,
 we have that the function $e$ is continuous.
On the other hand, using the null-homogeneity of the weighted generalized $\psi$-estimators with respect to the weights (see the discussion after Definition \ref{Def_Tn}), for all $k,m\in\ZZ_+$ with $k+m>0$, we can obtain
 \[
  \vartheta_{k+m,\psi}(k\odot x,m\odot y)
  =\vartheta_{2,\psi}^{(k,m)}(x,y)
  =\vartheta_{2,\psi}^{\left(\frac{k}{k+m},\frac{m}{k+m}\right)}(x,y)
  =e\left(\frac{m}{k+m}\right).
 \]

In what follows, let $x,y\in X$, $u,v\in\Theta_0$ with $M_1(x)<u<v<M_1(y)$ be fixed arbitrarily.
Then, as we have just proved, for each $i\in\{1,\ldots,N\}$, there exists a continuous function $e_i:[0,1]\to\Theta_i$ such that
 \[
  \vartheta_{k+m,\psi_i}(k\odot x,m\odot y) = e_i\left(\frac{m}{k+m}\right)
    \qquad \text{for all \ $k,m\in\ZZ_+$ \ with $k+m>0$,}
 \]
 and $e_i(\lambda)=\vartheta_{2,\psi_i}^{(1-\lambda,\lambda)}(x,y)$, $\lambda\in[0,1]$.
Define $f:[0,1]\to\Theta_0$ by
 \[
 f(\lambda):=g(e_1(\lambda),\ldots,e_N(\lambda)),\qquad \lambda\in[0,1].
 \]
Then $f$ is continuous (being a composition of continuous functions), and
 \begin{align*}
 &f(0):= g(e_1(0),\ldots,e_N(0))
      = g\big(\vartheta_{2,\psi_1}^{(1,0)}(x,y), \ldots, \vartheta_{2,\psi_N}^{(1,0)}(x,y) \big)\\
 &\phantom{f(0)\;}
      = g\big(\vartheta_{1,\psi_1}(x), \ldots, \vartheta_{1,\psi_N}(x) \big)
      = M_1(x),\\
 &f(1):= g(e_1(1),\ldots,e_N(1))
      = g\big(\vartheta_{2,\psi_1}^{(0,1)}(x,y), \ldots, \vartheta_{2,\psi_N}^{(0,1)}(x,y) \big)\\
 &\phantom{f(1)\;}
      = g\big(\vartheta_{1,\psi_1}(y), \ldots, \vartheta_{1,\psi_N}(y) \big)
      = M_1(y).
 \end{align*}
Due to the continuity of $f$, we have that $f([0,1])\supseteq [M_1(x),M_1(y)]\supseteq (u,v)$. Thus, $f^{-1}((u,v))$ is a nonempty and open subset of $(0,1)$. Hence, there exists a rational number $t\in(0,1)$ such that $f(t)\in(u,v)$.
By representing $t$ in the form $t=\frac{m}{k+m}$ with some $k,m\in\ZZ_+$ satisfying $k+m>0$, we have
 \begin{align*}
  M_{k+m}((k\odot x, m\odot y))
    & = g\big(\vartheta_{k+m,\psi_1}(k\odot x,m\odot y), \ldots, \vartheta_{k+m,\psi_N}(k\odot x,m\odot y) \big) \\
    & = g\left(  e_1\left(\frac{m}{k+m}\right),\ldots,  e_N\left(\frac{m}{k+m}\right)  \right)
      = f\left(\frac{m}{k+m}\right)
      = f(t)\in(u,v).
 \end{align*}
This implies that $M$ is sensitive, as desired.
\end{proof}

Theorem \ref{Thm_psi_est_infinit} readily yields the following corollary by choosing $N=1$ and $g(t)=t$, $t\in\Theta$.

\begin{Cor}\label{Cor_infinit}
Let $\psi\in\Psi[T,W_2](X,\Theta)$. Then the generalized $\psi$-estimator
 $\vartheta_\psi:\bigcup_{n=1}^\infty X^n\to\Theta$ defined by
 \[
    \vartheta_\psi(x_1,\ldots,x_n):= \vartheta_{n,\psi}(x_1,\ldots,x_n),
     \qquad n\in\NN,\quad x_1,\ldots,x_n\in X,
 \]
 possesses the property of sensitivity.
\end{Cor}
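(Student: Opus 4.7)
The plan is to deduce this corollary as a direct specialization of Theorem \ref{Thm_psi_est_infinit}. That theorem handles estimators of the form $M(x_1,\dots,x_n) = g(\vartheta_{n,\psi_1}(\bx),\dots,\vartheta_{n,\psi_N}(\bx))$ for a continuous aggregating function $g$, so the natural choice is to take the trivial aggregation.

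Concretely, I would set $N:=1$, $\Theta_0:=\Theta$, $\Theta_1:=\Theta$, $\psi_1:=\psi$, and let $g:\Theta\to\Theta$ be the identity function $g(t):=t$, which is clearly continuous. By hypothesis, $\psi\in\Psi[T,W_2](X,\Theta)$, so the hypothesis on $\psi_1$ in Theorem~\ref{Thm_psi_est_infinit} is satisfied. With these choices, the function $M:\bigcup_{n=1}^\infty X^n\to\Theta_0$ constructed in Theorem~\ref{Thm_psi_est_infinit} becomes
\[
 M(x_1,\dots,x_n) = g\bigl(\vartheta_{n,\psi}(x_1,\dots,x_n)\bigr) = \vartheta_{n,\psi}(x_1,\dots,x_n) = \vartheta_\psi(x_1,\dots,x_n),
\]
for every $n\in\NN$ and $x_1,\dots,x_n\in X$. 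Therefore $M$ and $\vartheta_\psi$ coincide as functions on $\bigcup_{n=1}^\infty X^n$.

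Theorem~\ref{Thm_psi_est_infinit} then asserts that $M$ possesses the property of sensitivity, and since $\vartheta_\psi\equiv M$, the corollary follows immediately. There is no genuine obstacle here: the entire content of the corollary is already packaged into Theorem~\ref{Thm_psi_est_infinit}, and the only thing to verify is that the degenerate choice $N=1$, $g=\mathrm{id}$ is legitimate under the stated hypotheses, which it plainly is.
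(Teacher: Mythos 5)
Your proposal is correct and matches the paper exactly: the corollary is stated there as an immediate consequence of Theorem~\ref{Thm_psi_est_infinit} obtained by choosing $N=1$ and $g(t)=t$ for $t\in\Theta$. Nothing further needs to be checked.
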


\section{Examples for Sections \ref{section_ine_mon_prop} and \ref{section_asy_con_prop}}
\label{section_examples}

In the first two examples, we demonstrate some possible applications of Theorems \ref{Thm_psi_becsles_mean_prop} and \ref{Thm_bisymmetry2}.

\begin{Ex}\label{Ex2}
Theorem \ref{Thm_psi_becsles_mean_prop} can efficiently be used to show that a certain estimator cannot be a generalized $\psi$-estimator.
To demonstrate this, we apply the example presented on page 66 of P\'ales \cite{Pal1984}.
Let $X:=\Theta:=\RR_{++}$ and define the estimator $\kappa:\bigcup_{n=1}^\infty\RR_{++}^n\to\RR_{++}$ by
\[
  \kappa(x_1,\dots,x_n):=\frac1{2n}\Big(x_1+\dots+x_n+n\sqrt[n]{x_1\cdots x_n}\Big),
  \qquad n\in\NN,\,\,x_1,\dots,x_n\in\RR_{++}.
\]
Then we claim that there does not exist $\psi\in\Psi[T](\RR_{++},\RR_{++})$ such that
\Eq{help_ex1}{
  \kappa(x_1,\dots,x_n)=\vartheta_{n,\psi}(x_1,\dots,x_n),\qquad n\in\NN,\,\,x_1,\dots,x_n\in\RR_{++}.
}
To the contrary, assume that \eqref{help_ex1} holds for some $\psi\in\Psi[T](\RR_{++},\RR_{++})$.
Then applying Theorem \ref{Thm_psi_becsles_mean_prop} to the $4$-tuple $(x_1,x_2,x_3,x_4):=(1,81,25,25)$, we would obtain that
\[
 \min(\vartheta_{2,\psi}(1,81),\vartheta_{2,\psi}(25,25))\leq \vartheta_{4,\psi}(1,81,25,25)\leq
 \max(\vartheta_{2,\psi}(1,81),\vartheta_{2,\psi}(25,25)).
\]
Therefore
\Eq{help_ex}{
 \min(\kappa(1,81),\kappa(25,25))\leq \kappa(1,81,25,25)\leq \max(\kappa(1,81),\kappa(25,25)).
}
On the other hand, one can easily compute that
\[
 \kappa(1,81)=\kappa(25,25)=25 \qquad\mbox{and}\qquad
 \kappa(1,81,25,25)= 24,
\]
and hence the left hand side inequality in \eqref{help_ex} is violated, which proves that \eqref{help_ex1} cannot be valid even for $n=4$.
\proofend
\end{Ex}

\begin{Ex}\label{Ex4}
Theorem \ref{Thm_bisymmetry2} can also be used to show that a given estimator cannot be a generalized $\psi$-estimator.
To see this, we consider the estimator $\kappa$ introduced in Example \ref{Ex2}.
Then we claim that there does not exist $\psi\in\Psi[T_4](\RR_{++},\RR_{++})$ such that
\Eq{*}{
  \kappa(x_1,\dots,x_n)=\vartheta_{n,\psi}(x_1,\dots,x_n),\qquad n\in\NN,\,\,x_1,\dots,x_n \in\RR_{++}.
}
To the contrary, assume that this equality holds for some $\psi\in\Psi[T_4](\RR_{++},\RR_{++})$.
Then applying Theorem \ref{Thm_bisymmetry2} with the choices  $X:=\Theta:=\RR_{++}$, $n:=4$, $m:=2$ and
 \[
     \begin{bmatrix}
       x_{1,1} & x_{1,2} \\
       x_{2,1} & x_{2,2} \\
       x_{3,1} & x_{3,2} \\
       x_{4,1} & x_{4,2} \\
     \end{bmatrix}
     := \begin{bmatrix}
       1 & 81 \\
       81 & 1 \\
       25 & 25 \\
       25 & 25 \\
     \end{bmatrix} \in\RR_{++}^{4\times 2},
 \]
 we would have that
\begin{align*}
 &\min\big(\vartheta_{2,\psi}(1,81),\vartheta_{2,\psi}(81,1),\vartheta_{2,\psi}(25,25),\vartheta_{2,\psi}(25,25)\big) \\
 &\qquad \leq \max\big(\vartheta_{4,\psi}(1,81,25,25), \vartheta_{4,\psi}(81,1,25,25)\big).
\end{align*}
Therefore
 \[
  \min\big(\kappa(1,81),\kappa(81,1),\kappa(25,25),\kappa(25,25)\big)
    \leq
  \max\big(\kappa(1,81,25,25), \kappa(81,1,25,25)\big).
 \]
This leads us to a contradiction, since
 \[
   \kappa(1,81) = \kappa(81,1)=\kappa(25,25) = 25,
   \qquad
  \kappa(1,81,25,25) = \kappa(81,1,25,25) = 24.
 \]

We note that $\kappa$ satisfies the bisymmetry-type inequality in Theorem \ref{Thm_bisymmetry2} for $n=m=2$.
To see this, observe that
\[
 \kappa(x,y)=\frac14(x+y+2\sqrt{xy})
 =\big(\tfrac12(\sqrt{x}+\sqrt{y})\big)^2, \qquad x,y\in\RR_{++}.
\]
On the other hand, let $\psi:\RR_{++}\times\RR_{++}\to\RR$ be given by $\psi(x,t):=\sqrt{x}-\sqrt{t}$, $x,t\in\RR_{++}$.
Then $\psi\in\Psi[T](\RR_{++},\RR_{++})$, and one can easily see that
 \[
    \vartheta_{n,\psi}(x_1,\ldots,x_n) = \left(\frac{1}{n}\sum_{i=1}^n \sqrt{x_i}\right)^2,
     \qquad n\in\NN, \;\; x_1,\ldots,x_n\in\RR_{++}.
 \]
In particular, we have $\vartheta_{2,\psi}(x,y)=\kappa(x,y)$, $x,y\in\RR_{++}$.
However, it does not hold that $\vartheta_{3,\psi}(x,y,z)=\kappa(x,y,z)$, $x,y,z\in\RR_{++}$,
 since, for example, $\vartheta_{3,\psi}(1,1,64)=((1+1+8)/3)^2 = 100/9$, but
 $\kappa(1,1,64)=(1+1+64+3\cdot 4)/6 = 78/6=13$.
In view of Theorem \ref{Thm_bisymmetry2}, $\psi$ satisfies the bisymmetry-type inequality, and hence we have that
\[
 \min(\vartheta_{2,\psi}(x,y),\vartheta_{2,\psi}(u,v))\leq \max(\vartheta_{2,\psi}(x,u),\vartheta_{2,\psi}(y,v)), \qquad x,y,u,v\in\RR_{++}.
\]
Using the identity $\vartheta_{2,\psi}(x,y)=\kappa(x,y)$, $x,y\in\RR_{++}$, this relation shows that $\kappa$ satisfies the 2-variable bisymmetry-type inequality.
\proofend
\end{Ex}

In the next example, we shed some light on the conditions of Theorem \ref{Thm_theta_monotone}.

\begin{Ex}
Let $X:=\RR$, $\Theta:=\RR$ and  $\psi: X\times \Theta\to\RR$ be given by
 \[
  \psi(x,t):= \sign(x-t), \qquad x,t\in\RR.
 \]
Then $\psi\in\Psi[T_1](X,\Theta)$, and $\vartheta_1(x)=x$, $x\in X$.
We check that $\psi$ is a $T_2^{(\lambda,1-\lambda)}$-function for any $\lambda\in[0,1]\setminus \{\frac{1}{2}\}$, but it is not a $T_2^{(\frac{1}{2},\frac{1}{2})}$-function.

If $\lambda\in[0,1]\setminus \{\frac{1}{2}\}$, then $\psi$ is a $T_2^{(\lambda,1-\lambda)}$-function with
\begin{align}\label{eq:help_0}
   \vartheta_2^{(\lambda,1-\lambda)}(x,y)
     := \begin{cases}
           y & \text{if $\lambda\in[0,\frac{1}{2})$,}\\
           x & \text{if $\lambda\in(\frac{1}{2},1]$,}
        \end{cases} \qquad x,y\in\RR.
\end{align}
Indeed, if $x=y$, then $\lambda\psi(x,t) + (1-\lambda)\psi(y,t) = \psi(x,t)$, $t\in\RR$.
If $x<y$, then
 \begin{align*}
  \lambda\psi(x,t) + (1-\lambda)\psi(y,t)
      = \begin{cases}
          1 & \text{if $t<x$,}\\
          1-\lambda & \text{if $t=x$,}\\
          1-2\lambda  & \text{if $x<t<y$,}\\
          -\lambda  & \text{if $t=y$,}\\
          -1 & \text{if $t>y$,}
        \end{cases}
 \end{align*}
while if $x>y$, then
 \begin{align*}
  \lambda\psi(x,t) + (1-\lambda)\psi(y,t)
      = \begin{cases}
          1 & \text{if $t<y$,}\\
          \lambda & \text{if $t=y$,}\\
          2\lambda-1 & \text{if $y<t<x$,}\\
          \lambda-1  & \text{if $t=x$,}\\
          -1 & \text{if $t>x$.}
        \end{cases}
 \end{align*}
These equalities show that \eqref{eq:help_0} is valid if
$\lambda\in[0,1]\setminus \{\frac{1}{2}\}$.

On the other hand, if $x,y\in\RR$ with $x<y$, then the function $\RR\ni t \mapsto \frac{1}{2}\psi(x,t) + \frac{1}{2}\psi(y,t)$ takes value 0 on the non-degenerate open interval $(x,y)$, and hence $\psi$ is not a $T_2^{(\frac{1}{2},\frac{1}{2})}$-function.

One can also see that the function $[0,1]\setminus \{\frac{1}{2}\} \ni \lambda \mapsto \vartheta_2^{(\lambda,1-\lambda)}(x,y)$
 is decreasing in case of $x\leq y$, and is increasing in case of $x\geq y$, but not strictly decreasing/increasing in the corresponding cases.
This example might suggest that, in general, one cannot expect that the function $\cJ_{\ba,\bb}\ni s\mapsto \vartheta_n^{s\ba+\bb}(\bx)\in\Theta$ in
Theorem \ref{Thm_theta_monotone} is {\sl strictly} monotone.
\proofend
\end{Ex}

In what follows, all the random variables are defined on an appropriate probability space $(\Omega,\cA,\PP)$,
 and, given a random variable $\xi$, by a sample of size $n$ for $\xi$ (where $n\in\NN$), we mean i.i.d.\ random variables
 \ $\xi_1,\ldots,\xi_n$ \ with the same distribution as that of $\xi$.
In the forthcoming three examples, the following two settings will repeatedly come into play.

 \begin{enumerate}[{\sc Setting} I:]
    \item
     Let $\xi$ be a normally distributed random variable with mean $m\in\RR$ and with variance $\sigma^2$, where $\sigma>0$.
     We suppose that $\sigma$ is known.
     Let $n\in\NN$ and $x_1,\ldots,x_n\in\RR$ be a realization of a sample of size $n$ for $\xi$.
     It is known that there exists a unique MLE of $m$ based on $x_1,\ldots,x_n\in\RR$, and it takes the form $\widehat m_n:=\frac{x_1+\cdots+x_n}{n}$.
     By Barczy and P\'ales \cite[Example 4.5]{BarPal2}, the corresponding function $\psi$ takes the form $\psi:\RR\times\RR\to\RR$,
     \[
       \psi(x,m) = \frac{1}{\sigma^2}(x-m),\qquad x,m\in\RR.
    \]
   \item
    Let $\alpha>0$ and let $\xi$ be an absolutely continuous random variable with a density function
       \begin{align*}
        f_\xi(x):=\begin{cases}
                2\alpha x (1-x^2)^{\alpha-1}  & \text{if $x\in(0,1)$,}\\
                0 & \text{otherwise.}
       \end{cases}
  \end{align*}
    Then one can check that given $n\in\NN$ and a realization $x_1,\ldots,x_n\in(0,1)$
    of a sample of size $n$ for $\xi$, there exists a unique MLE of $\alpha$ and it takes the form
    \begin{align*}
      \widehat\alpha_n:=-\frac{n}{\sum_{i=1}^n \ln(1-x_i^2)}.
    \end{align*}
    By Barczy and P\'ales \cite[Example 4.6]{BarPal2},  the corresponding function $\psi$ takes the form $\psi:(0,1)\times(0,\infty)\to\RR$,
   \[
     \psi(x,\alpha) = \frac{1}{\alpha} + \ln(1-x^2), \qquad x\in(0,1),\quad \alpha>0.
   \]
  \end{enumerate}

Next, we present some examples for sensitive and non-sensitive functions $M$, which have important role in statistics.

\begin{Ex}\label{Ex5}
(i):\ Assume Setting I.
By choosing $X:=\Theta:=\RR$, the function $M:\bigcup_{n=1}^\infty X^n\to\Theta$, defined by
 $M(x_1,\ldots,x_n):=\frac{1}{n}(x_1+\cdots+x_n)$, $n\in\NN$, $(x_1,\ldots,x_n)\in X^n$, possesses the property of sensitivity.
Indeed, let $x,y,u,v\in\RR$  be given such that $M_1(x)=x<u<v<y=M_1(y)$.
Using that the function $e:[0,1]\to\RR$, $e(\lambda):=\lambda x + (1-\lambda)y$, $\lambda\in[0,1]$, is continuous,
 $e(0)=y$ and $e(1)=x$, by Bolzano's theorem, there exists $\lambda_0\in(0,1)$ such that $e(\lambda_0)\in(u,v)$.
Using again the continuity of $e$, there exist $p,q\in\NN$ such that $p<q$ and $e\big(\frac{p}{q}\big)\in(u,v)$.
By choosing $k:=p$ and $m:=q-p$, we have
 \[
   e\left(\frac{p}{q}\right) = e\left(\frac{k}{k+m}\right) =\frac{k}{k+m}x + \frac{m}{k+m}y = M_{k+m}(k\odot x, m\odot y),
 \]
 yielding that $M_{k+m}(k\odot x, m\odot y)\in(u,v)$.

(ii): Assume Setting II.
By choosing $X:=(0,1)$ and $\Theta:=(0,\infty)$, the function $M:\bigcup_{n=1}^\infty X^n\to\Theta$, defined by
 $M(x_1,\ldots,x_n):= - n/\sum_{i=1}^n \ln(1-x_i^2)$, $n\in\NN$, $(x_1,\ldots,x_n)\in X^n$, possesses the property of sensitivity.
Indeed, let $x,y\in(0,1)$ and $u,v\in(0,\infty)$  be given such that
 \[
   M_1(x)=-\frac{1}{\ln(1-x^2)}<u<v<-\frac{1}{\ln(1-y^2)}=M_1(y).
 \]
Define the function $e:[0,1]\to\RR$ by
\[
  e(\lambda):=-\frac{1}{\lambda \ln(1-x^2) + (1-\lambda) \ln(1-y^2)},
  \qquad \lambda\in[0,1].
\]
Then $e$ is continuous, $e(0) = M_1(y)$ and $e(1) = M_1(x)$.
By Bolzano's theorem, there exists $\lambda_0\in(0,1)$ such that $e(\lambda_0)\in(u,v)$. Using again the continuity of $e$, there exist $p,q\in\NN$ such that $p<q$ and $e\big(\frac{p}{q}\big)\in (u,v)$. By choosing $k:=p$ and $m:=q-p$, we have
 \[
   e\left(\frac{p}{q}\right) =
   -\frac{k+m}{k\ln(1-x^2) + m\ln(1-y^2)}
   = M_{k+m}(k\odot x, m\odot y).
 \]
Consequently, $M_{k+m}(k\odot x, m\odot y)\in(u,v)$, as desired.

(iii):
Let $X:=\Theta:=\RR$, and define $M:\bigcup_{n=1}^\infty X^n\to\Theta$ by
 $M(x_1,\ldots,x_n):=\max(x_1,\ldots,x_n)$, $n\in\NN$, $(x_1,\ldots,x_n)\in X^n$.
We show that $M$ does not possess the property of sensitivity.
Let $x,y,u,v\in\RR$ be given such that $M_1(x)=x<u<v<y=M_1(y)$.
Then for each $k,m\in\NN$, we have $M_{k+m}(k\odot x, m\odot y) = y$, and hence $M_{k+m}(k\odot x, m\odot y)<v$ cannot hold,
 yielding that $M$ is not sensitive.
According to Corollary \ref{Cor_infinit}, the fact that $M$ does not possess the property of sensitivity also implies that $M$ cannot be derived as a generalized $\psi$-estimator with some $\psi\in\Psi[T,W_2](\RR,\RR)$.

Similarly, we can check that the function $\widetilde M:\bigcup_{n=1}^\infty X^n\to\Theta$ defined by
 \[
  \widetilde M(x_1,\ldots,x_n):=\frac12(\min(x_1,\ldots,x_n)+\max(x_1,\ldots,x_n)),\qquad  n\in\NN, \quad (x_1,\ldots,x_n)\in X^n,
 \]
does not possess the property of sensitivity (hence, it is also not generated by any $\psi\in\Psi[T,W_2](\RR,\RR)$ as a generalized $\psi$-estimator).
Let $x,y,u,v\in\RR$ be given such that $\widetilde M_1(x)=x<u<v<y=\widetilde M_1(y)$.
Then for each $k,m\in\NN$, we have $\widetilde M_{k+m}(k\odot x, m\odot y) = \frac{x+y}{2}$.
Consequently, in the case of $x<\frac{x+y}{2}<u<v<y$, we have that $u<\widetilde M_{k+m}(k\odot x, m\odot y)$ cannot be valid,
 yielding that $\widetilde M$ is not sensitive.

The estimators $M$ and $\widetilde M$ have statistical relevance.
Recall that if $\xi$ is a random variable uniformly distributed on the interval $[a,b]$, where $a<b$, and $x_1,\ldots,x_n\in[a,b]$ is a realization of a sample of size $n$ for $\xi$ (where $n\in\NN$),
then it is known that there exists a unique MLE of the parameters $a$ and $b$ based on $x_1,\ldots,x_n$,
and it takes the form $\min(x_1,\ldots,x_n)$ and $\max(x_1,\ldots,x_n)$, respectively. Further, $\frac12(\min(x_1,\ldots,x_n)+\max(x_1,\ldots,x_n))$ is the MLE of mean $\frac{a+b}{2}$ of $\xi$, which is called the sample mid range.
\proofend
\end{Ex}

In the next example we calculate the weighted generalized $\psi$-estimators for two particular functions $\psi$,
 and then we formulate the 2-variable bisymmetry-type inequality in these special cases.

\begin{Ex}\label{Ex1}
(i): Assume Setting I.
Then for each $n\in\NN$, $x_1,\ldots,x_n\in\RR$ and $(\lambda_1,\ldots,\lambda_n)\in\Lambda_n$,
 the equation $\sum_{i=1}^n \lambda_i \psi(x_i,m)=0$ has a unique solution $m\in\RR$, which is given by
 \begin{align}\label{help_ex_1}
   \vartheta_{n}^{(\lambda_1,\ldots,\lambda_n)}(x_1,\ldots,x_n)
     = \frac{\sum_{i=1}^n \lambda_i x_i}{\sum_{j=1}^n \lambda_j},
 \end{align}
 and hence $\psi$ is a $W$-function.
Note that $\vartheta_{n}^{(\lambda_1,\ldots,\lambda_n)}(x_1,\ldots,x_n)$ is nothing else but
 the weighted arithmetic mean of $x_1,\ldots,x_n$ with weights $\lambda_i$, $i=1,\ldots,n$.
In this case, the 2-variable bisymmetry-type inequality \eqref{ineq_22_bisymmeytry} takes the form
 \begin{align}\label{help_16}
  \begin{split}
  &\min\left(\frac{\alpha}{\alpha+\beta}x+\frac{\beta}{\alpha+\beta}y,
              \frac{\gamma}{\gamma+\delta}u+\frac{\delta}{\gamma+\delta}v \right) \\
  &\qquad \leq  \max\left(\frac{\alpha}{\alpha+\gamma}x+\frac{\gamma}{\alpha+\gamma}u,
              \frac{\beta}{\beta+\delta}y+\frac{\delta}{\beta+\delta}v \right)
  \end{split}
 \end{align}
 for all $x,y,u,v\in \RR$ and $\alpha,\beta,\gamma,\delta\in\RR_+$ such that $\alpha+\beta>0$,
 $\gamma+\delta>0$, \ $\alpha+\gamma>0$ and $\beta+\delta>0$.

We remark that the inequality \eqref{help_16} can also be directly checked using only the definition of weighted arithmetic means.
Namely, the left hand side of \eqref{help_16} can be upper estimated as follows
 \begin{align*}
    &\min( \vartheta_2^{(\alpha,\beta)}(x,y), \vartheta_2^{(\gamma,\delta)}(u,v) )\\
    &\leq \vartheta_2^{(\alpha+\beta, \gamma+\delta)} \big(\vartheta_2^{(\alpha,\beta)}(x,y),  \vartheta_2^{(\gamma,\delta)}(u,v)\big)\\
     &= \frac{\alpha+\beta}{\alpha+\beta+\gamma+\delta} \left( \frac{\alpha}{\alpha+\beta}x + \frac{\beta}{\alpha+\beta}y \right)
        + \frac{\gamma+\delta}{\alpha+\beta+\gamma+\delta} \left( \frac{\gamma}{\gamma+\delta}u + \frac{\delta}{\gamma+\delta}v \right)\\
     &= \frac{1}{\alpha+\beta+\gamma+\delta} (\alpha x+ \beta y + \gamma u + \delta v)\\
     &= \frac{\alpha+\gamma}{\alpha+\gamma+\beta+\delta} \left( \frac{\alpha}{\alpha+\gamma}x + \frac{\gamma}{\alpha+\gamma}u \right)
        + \frac{\beta+\delta}{\alpha+\gamma+\beta+\delta} \left( \frac{\beta}{\gamma+\delta}y + \frac{\delta}{\beta+\delta}v \right)\\
     &= \vartheta_2^{(\alpha+\gamma, \beta+\delta)} \big(\vartheta_2^{(\alpha,\gamma)}(x,u),  \vartheta_2^{(\beta,\delta)}(y,v)\big)\\
     &\leq \max( \vartheta_2^{(\alpha,\gamma)}(x,u), \vartheta_2^{(\beta,\delta)}(y,v) ),
 \end{align*}
where the two inequalities follow by the definition of 2-variable means.

(ii): Assume Setting II.
Then for each $n\in\NN$, $x_1,\ldots,x_n\in(0,1)$ and $(\lambda_1,\ldots,\lambda_n)\in\Lambda_n$,
 the equation $\sum_{i=1}^n \lambda_i \psi(x_i,\alpha)=0$, $\alpha>0$, has the unique solution
 \begin{align}\label{help_ex_2}
  \begin{split}
     \vartheta_{n}^{(\lambda_1,\ldots,\lambda_n)}(x_1,\ldots,x_n)
   = -\frac{\sum_{i=1}^n \lambda_i}{\sum_{i=1}^n \lambda_i \ln(1-x_i^2)}
   = - \frac{1}{\ln\left( \prod_{i=1}^n (1-x_i^2)^{\frac{\lambda_i}{\sum_{j=1}^n \lambda_j}} \right)},
 \end{split}
 \end{align}
and hence $\psi$ is a $W$-function.
In this case the 2-variable bisymmetry-type inequality \eqref{ineq_22_bisymmeytry} takes the form
  \begin{align*}
  &\min\left(-\frac{a+b}{a \ln(1-x^2) + b \ln(1-y^2)},
                -\frac{c+d}{c \ln(1-u^2) + d \ln(1-v^2)}  \right) \\
  &\qquad \leq  \max\left( -\frac{a+c}{a \ln(1-x^2) + c \ln(1-u^2)},
                           -\frac{b+d}{b \ln(1-y^2) + d \ln(1-v^2)} \right)
 \end{align*}
 for all $x,y,u,v\in(0,1)$ and $a,b,c,d\in\RR_+$ such that $a+b>0$,
 $c+d>0$, \ $a+c>0$ and $b+d>0$.
\proofend
\end{Ex}

In the next example, we give applications of Corollary \ref{Cor_psi_est_eq_3_T}.

\begin{Ex}
(i): Assume Setting I.
By \eqref{help_ex_1}, for all $n,m\in\NN$, $\by=(y_1,\dots,y_n)\in \RR^n$ and $\bz=(z_1,\dots,z_m)\in \RR^m$, we have
 \[
   \vartheta_{\ell n+m}(\ell\odot\by,\bz)
      = \frac{\ell(y_1+\dots+y_n)+z_1+\dots+z_m}{\ell n+m},\qquad \ell\in\NN,
 \]
 and hence
 \[
   \vartheta_{\ell n+m}(\ell\odot\by,\bz) \to \frac{y_1+\dots+y_n}{n} = \vartheta_n(\by)
     \qquad \text{as $\ell\to\infty$.}
 \]
This is in accordance with Corollary \ref{Cor_psi_est_eq_3_T}.

(ii): Assume Setting II.
By \eqref{help_ex_2}, for all $\by=(y_1,\dots,y_n)\in (0,1)^n$ and $\bz=(z_1,\dots,z_m)\in(0,1)^m$, we have
 \[
   \vartheta_{\ell n+m}(\ell\odot\by,\bz)
     = - \frac{1}{\ln\left(\prod_{i=1}^n(1-y_i^2)^{\frac{\ell}{\ell n+m}}
     \prod_{j=1}^m(1-z_j^2)^{\frac{1}{\ell n+m}} \right)},
     \qquad \ell\in\NN,
 \]
 and hence
 \[
    \vartheta_{\ell n+m}(\ell\odot\by,\bz)
    \to -\frac{1}{\ln\left(\prod_{i=1}^n(1-y_i^2)^{\frac{1}{n}} \right)} = \vartheta_n(\by)
     \qquad \text{as \ $\ell\to\infty$.}
 \]
 This is in accordance with Corollary \ref{Cor_psi_est_eq_3_T}.
\proofend
\end{Ex}

\appendix

\section{Appendix: quasi-affine functions on real line}\label{App1}

In this part, we give a characterisation of quasi-affine functions defined on a subinterval of $\RR$.
 This can be considered as a folklore in the literature, but we could not address any reference for it,
  so we decided to provide a proof as well.
First, we present an auxiliary lemma on monotone functions, which is again a folklore in the literature.

\begin{Lem}\label{Lem_monotone}
Let $I\subseteq \RR$ be a nondegenerate interval and $f:I\to\RR$.
Then $f$ is monotone if and only if there do not exist $x,y,z\in I$ with $x<y<z$ such that $f(x)<f(y)$ and $f(y)>f(z)$,
 or $f(x)>f(y)$ and $f(y)<f(z)$ hold.
\end{Lem}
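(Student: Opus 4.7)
The plan is to reformulate the hypothesis (no strict peak or valley) as the equivalent statement that for every $x<y<z$ in $I$, the value $f(y)$ lies in the closed interval with endpoints $f(x)$ and $f(z)$, i.e.\ $\min(f(x),f(z))\leq f(y)\leq \max(f(x),f(z))$. Granted this reformulation, the forward direction is immediate: if $f$ is non-decreasing, then $x<y<z$ yields $f(x)\leq f(y)\leq f(z)$, which precludes both $f(y)>f(z)$ and $f(x)>f(y)$; the non-increasing case is symmetric.

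For the reverse direction, assume the reformulated hypothesis. If $f$ is constant the claim is trivial, so fix $u,v\in I$ with $f(u)\neq f(v)$ and, after interchanging the roles of $u$ and $v$ and of non-decreasing/non-increasing if needed, assume $u<v$ and $f(u)<f(v)$; the aim is then to show that $f$ is non-decreasing. I would first establish the auxiliary fact that for every $c\in I$ one has (i) $f(c)\leq f(u)$ whenever $c<u$, (ii) $f(u)\leq f(c)\leq f(v)$ whenever $u<c<v$, and (iii) $f(c)\geq f(v)$ whenever $c>v$. Each follows by applying the reformulated hypothesis to the triple $(c,u,v)$, $(u,c,v)$, or $(u,v,c)$ respectively, and using the strict inequality $f(u)<f(v)$ to pin down which endpoint realizes the extremum in the containing interval: for (i), the inclusion $f(u)\in[\min(f(c),f(v)),\max(f(c),f(v))]$ combined with $f(u)<f(v)$ forces $f(c)<f(v)$, whence $f(c)\leq f(u)$, and (iii) is analogous.

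To deduce that $f$ is non-decreasing, I would take arbitrary $p<q$ in $I$ and run a case analysis on the position of $p,q$ relative to $u,v$. The mixed cases, in which $p$ and $q$ lie on opposite sides of $u$ or of $v$, are handled immediately by chaining the inequalities in (i)--(iii) through the intermediate value $f(u)$ or $f(v)$. The three remaining cases, $p<q\leq u$, $u\leq p<q\leq v$, and $v\leq p<q$, are each resolved by one further application of the reformulated hypothesis to the triples $(p,q,u)$, $(p,q,v)$, and $(v,p,q)$, respectively, combined with the comparison already supplied by (i)--(iii): for example, in the middle case, (ii) gives $f(p)\leq f(v)$, and then the hypothesis applied to $(p,q,v)$ forces $f(q)\in[f(p),f(v)]$, hence $f(p)\leq f(q)$.

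The main obstacle is the bookkeeping of this case analysis: the underlying ideas are entirely elementary, but one must make sure every configuration of $(p,q)$ relative to $(u,v)$ is covered, that the correct triple is chosen in each subcase to extract the desired inequality from the reformulated hypothesis, and that boundary subcases where $p$ or $q$ coincides with $u$ or $v$ are handled without circularity.
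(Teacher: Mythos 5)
Your proposal is correct and follows essentially the same route as the paper: both reformulate the hypothesis as the statement that $f(y)$ always lies between $f(x)$ and $f(z)$ for $x<y<z$, fix a pair $u<v$ with $f(u)<f(v)$ when $f$ is not monotone in the other sense, and then run an exhaustive case analysis on positions relative to that pair to conclude that $f$ is non-decreasing. The only difference is organizational: you factor out the one-point comparisons (i)--(iii) before treating arbitrary pairs, whereas the paper handles the six configurations of $x<y$ relative to $u<v$ directly.
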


\begin{proof}
Assume that $f$ is monotone.
Then for all $x,y,z\in I$ with $x<y<z$, we have $f(x)\leq f(y)\leq f(z)$ or $f(x)\geq f(y)\geq f(z)$.
Hence there do not exist $x,y,z\in I$ with $x<y<z$ such that $f(x)<f(y)$ and $f(y)>f(z)$,
 or $f(x)>f(y)$ and $f(y)<f(z)$ hold.

Now assume that there do not exist $x,y,z\in I$ with $x<y<z$ such that $f(x)<f(y)$ and $f(y)>f(z)$,
 or $f(x)>f(y)$ and $f(y)<f(z)$ hold.
We will frequently use that this assumption implies that
 if $s,t\in I$ with $s<t$ such that $f(s)\leq f(t)$, then $f(r)\in[f(s),f(t)]$ for all $r\in[s,t]$.
If $f$ is monotone decreasing, then we are done.
If $f$ is not monotone decreasing, then there exist $a,b\in I$ with $a<b$ such that $f(a)<f(b)$. Then
we are going to show that $f$ is monotone increasing. To see this, let $x,y\in I$ be arbitrarily fixed with $x<y$.
We can distinguish six cases.

If $x<y\leq a$, then $f(x)\leq f(a)$ (otherwise $f(x)>f(a)$ and $f(a)<f(b)$ would lead us to a contradiction),
 and hence $f(y)\in[f(x),f(a)]$, yielding that $f(x)\leq f(y)$.

If $x\leq a < y \leq b$, then $f(x)\leq f(a)$ and $f(y)\in[f(a),f(b)]$, yielding that $f(x)\leq f(y)$.

If $x\leq a < b < y$, then $f(x)\leq f(a)$ and $f(b)\leq f(y)$ (otherwise $f(b)>f(y)$ and $f(a)<f(b)$ would lead us to a contradiction),
 yielding that $f(x)\leq f(y)$.

If $a<x<y\leq b$, then $f(x)\in[f(a),f(b)]$ and hence $f(y)\in[f(x),f(b)]$, yielding that $f(x)\leq f(y)$.

If $a<x\leq b<y$, then $f(x)\in [f(a),f(b)]$ and $f(b)\leq f(y)$ (otherwise $f(b)>f(y)$ and $f(b)>f(a)$ would lead us to a contradiction),
  yielding that $f(x)\leq f(y)$.

If $a<b<x<y$, then $f(x)\geq f(b)$  (otherwise $f(x)<f(b)$ and $f(b)>f(a)$ would lead us to a contradiction)
 and hence $f(y)\geq f(x)$ (otherwise $f(y)<f(x)$ and $f(x)\geq f(b)>f(a)$ would lead us to a contradiction), i.e.,  $f(x)\leq f(y)$.

All in all, $f(x)\leq f(y)$ for all $x,y\in I$ with $x<y$, i.e., $f$ is monotone increasing.
\end{proof}

\begin{Pro}\label{Pro_quasi}
Let $I\subseteq \RR$ be a nondegenerate interval and $f:I\to\RR$. Then $f$ is quasi-affine if and only if $f$ is monotone on $I$.
\end{Pro}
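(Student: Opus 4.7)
The plan is to prove the equivalence by exploiting the following reformulation of the quasi-affine property: $f$ is quasi-affine on $I$ if and only if for all $x,z\in I$ with $x<z$ and every $y\in(x,z)$, the value $f(y)$ lies in the closed interval with endpoints $f(x)$ and $f(z)$, i.e.,
\[
  \min\{f(x),f(z)\} \leq f(y) \leq \max\{f(x),f(z)\}.
\]
This reformulation is a direct consequence of writing an arbitrary $y\in(x,z)$ as the convex combination $y=tx+(1-t)z$ with $t=(z-y)/(z-x)\in(0,1)$ and combining the defining inequalities for quasi-convexity and quasi-concavity.

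For the sufficiency, I would observe that if $f$ is monotone on $I$ and $x<y<z$ are points of $I$, then $f(y)$ lies between $f(x)$ and $f(z)$ by the definition of monotonicity, so the reformulation above yields that $f$ is quasi-affine.

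For the necessity, I would argue by contradiction using Lemma~\ref{Lem_monotone}. Suppose $f$ is quasi-affine but not monotone. Then Lemma~\ref{Lem_monotone} produces points $x,y,z\in I$ with $x<y<z$ such that either (a) $f(x)<f(y)$ and $f(y)>f(z)$, or (b) $f(x)>f(y)$ and $f(y)<f(z)$. In case (a), we have $f(y)>\max\{f(x),f(z)\}$, which contradicts the quasi-convexity of $f$ applied to the convex combination expressing $y$ in terms of $x$ and $z$. In case (b), we have $f(y)<\min\{f(x),f(z)\}$, which contradicts the quasi-concavity of $f$ in the same way. Either case yields the desired contradiction, completing the proof.

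I do not anticipate a real obstacle: once Lemma~\ref{Lem_monotone} is available, the argument is essentially mechanical, reducing to a two-case contradiction test. The only subtlety worth being careful about is translating between the $t\in(0,1)$ convex-combination formulation of quasi-convexity/quasi-concavity and the ordered-triple formulation $x<y<z$ used in the lemma, but this translation is routine.
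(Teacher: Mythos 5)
Your proposal is correct and follows essentially the same route as the paper: both directions rest on Lemma~\ref{Lem_monotone}, the necessity part is the same two-case contradiction via the convex combination $y=tx+(1-t)z$, and the sufficiency part is the same observation that monotonicity places $f(y)$ between $f(x)$ and $f(z)$. The explicit ``betweenness'' reformulation of quasi-affinity you state up front is only a cosmetic repackaging of what the paper does inline.
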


\begin{proof}
Assume that $f$ is quasi-affine, and, on the contrary, suppose that $f$ is not monotone.
Then, by Lemma \ref{Lem_monotone}, there exist $x,y,z\in I$ with $x<y<z$ such that $f(x)<f(y)$ and $f(y)>f(z)$,
 or $f(x)>f(y)$ and $f(y)<f(z)$ hold.
Since $x<y<z$, there exists $t\in(0,1)$ such that $y=tx+(1-t)z$.

\noindent In the case of $f(x)<f(y)$ and $f(y)>f(z)$, we have
 \[
   f(tx+(1-t)z) = f(y) > \max(f(x),f(z)),
 \]
 and hence $f$ cannot be quasi-convex, leading us to a contradiction.

\noindent In the case of $f(x)>f(y)$ and $f(y)<f(z)$, we have
 \[
   f(tx+(1-t)z) = f(y) < \min(f(x),f(z)),
 \]
 and hence $f$ cannot be quasi-concave, leading us to a contradiction as well.

Assume now that $f$ is monotone.
Then
 \[
  f(tx+(1-t)y) \in\big[ \min(f(x),f(y)), \max(f(x),f(y))\big], \qquad x,y\in I, \;\; t\in(0,1),
 \]
 yielding that $f$ is quasi-convex and quasi-concave, i.e., quasi-affine.
\end{proof}

\small


\begin{thebibliography}{10}

\bibitem{Baj58}
M.~Bajraktarevi{\'c}.
\newblock Sur une {\'e}quation fonctionnelle aux valeurs moyennes.
\newblock {\em Glasnik Mat.-Fiz. Astronom. Dru{\v s}tvo Mat. Fiz. Hrvatske Ser.
  II}, 13:243--248, 1958.

\bibitem{BarPal2}
M.~Barczy and {\relax Zs}.~P{\'a}les.
\newblock Existence and uniqueness of weighted generalized $\psi$-estimators.
\newblock arXiv 2211.06026, 2022.

\bibitem{BarPal3}
M.~Barczy and {\relax Zs}.~P{\'a}les.
\newblock Comparison and equality of generalized $\psi$-estimators.
\newblock arXiv 2309.04773, 2023.

\bibitem{Dar71b}
Z.~Dar{\'o}czy.
\newblock A general inequality for means.
\newblock {\em Aequationes Math.}, 7(1):16--21, 1971.

\bibitem{Dar72b}
Z.~Dar{\'o}czy.
\newblock {\"U}ber eine {K}lasse von {M}ittelwerten.
\newblock {\em Publ. Math. Debrecen}, 19:211--217 (1973), 1972.

\bibitem{Gin1938}
C.~Gini.
\newblock Di una formula comprensiva delle medie.
\newblock {\em Metron}, 13(2):3--22, 1938.

\bibitem{GrePie}
H.~J. Greenberg and W.~P. Pierskalla.
\newblock A review of quasi-convex functions.
\newblock {\em Oper. Res.}, 19:1553--1570, 1971.

\bibitem{HarLitPol34}
G.~H. Hardy, J.~E. Littlewood, and G.~P{\'o}lya.
\newblock {\em Inequalities}.
\newblock Cambridge, at the University Press, 2nd edition, 1952.

\bibitem{Hub64}
P.~J. Huber.
\newblock Robust estimation of a location parameter.
\newblock {\em Ann. Math. Statist.}, 35:73--101, 1964.

\bibitem{Hub67}
P.~J. Huber.
\newblock The behavior of maximum likelihood estimates under nonstandard
  conditions.
\newblock In {\em Proc. {F}ifth {B}erkeley {S}ympos. {M}ath. {S}tatist. and
  {P}robability ({B}erkeley, {C}alif., 1965/66), {V}ol. {I}: {S}tatistics},
  pages 221--233. Univ. California Press, Berkeley, Calif., 1967.

\bibitem{Kos}
M.~R. Kosorok.
\newblock {\em Introduction to Empirical Processes and Semiparametric
  Inference}.
\newblock Springer Series in Statistics. Springer, New York, 2008.

\bibitem{Pal82a}
{\relax Zs}.~P{\'a}les.
\newblock Characterization of quasideviation means.
\newblock {\em Acta Math. Acad. Sci. Hungar.}, 40(3-4):243--260, 1982.

\bibitem{Pal1984}
{\relax Zs}.~P\'{a}les.
\newblock Inequalities for comparison of means.
\newblock In {\em General inequalities, 4 ({O}berwolfach, 1983)}, volume~71 of
  {\em Internat. Schriftenreihe Numer. Math.}, pages 59--73. Birkh\"{a}user,
  Basel, 1984.

\bibitem{Vaa}
A.~W. van~der Vaart.
\newblock {\em Asymptotic {S}tatistics}, volume~3 of {\em Cambridge Series in
  Statistical and Probabilistic Mathematics}.
\newblock Cambridge University Press, Cambridge, 1998.

\end{thebibliography}

\end{document}